\renewcommand{\tilde}{\widetilde}
\newcommand{\C}{\mathbb{C}}
\newcommand{\CP}{\mathbb{CP}}
\newcommand{\R}{\mathbb R}
\def\H{{\mathcal H}}
\def\X{{\mathcal X}}
\def\W{{\mathcal W}}
\def\buone{{m_1}}
\def\butwo{{m_2}}
\def\buthree{{m_3}}
\def\Lx{{L_x}}
\def\Ly{{L_y}}
\definecolor{greenish}{RGB}{0,120,0}
\definecolor{purplish}{RGB}{100,0,100}
\definecolor{cyanish}{RGB}{0,180,200}
\newtheorem{theorem}{\bf Theorem}[section]
\newtheorem{proposition}[theorem]{\bf Proposition}
\newtheorem{lemma}[theorem]{\bf Lemma}
\newtheorem*{theorem*}{\bf Theorem}
\newtheorem{corollary}[theorem]{\bf Corollary}
\theoremstyle{remark}
\newtheorem{remark}[theorem]{\bf Remark}
\begin{document}

\title{Complex perspective for the projective heat map acting on pentagons}

\author[S. R. Kaschner]{Scott R. Kaschner}
\address{Butler University Department of Mathematics \& Actuarial Science\\
Jordan Hall, Room 270\\
4600 Sunset Ave\\
Indianapolis, IN 46208,
 United States,}
\email{skaschne@butler.edu}

\begin{author}[R. K. W. Roeder]{Roland K. W. Roeder}
 \address{Department of Mathematical Sciences \\ IUPUI \\ LD Building, Room 224Q\\
402 North Blackford Street\\
Indianapolis, Indiana 46202-3267\\
 United States }
\email{rroeder@math.iupui.edu}
\end{author}

\subjclass[2010]{Primary 37F99; Secondary 32H50}
\keywords{pullback on cohomology, dynamical degrees}

\date{\today}

\begin{abstract}
We place Schwartz's work on the real dynamics of the  projective heat map 
$H$ into the complex perspective by computing its first dynamical degree and gleaning some corollaries about the dynamics of $H$.
\end{abstract}

\maketitle

\section{Introduction}

Let $\mathcal{P}_N$ denote the space of projective equivalence classes of
$N$-gons in $\mathbb{RP}^2$.  The projective heat map is a self-mapping
of $\mathcal{P}_N$ that was introduced by R.~Schwartz in the
monograph~\cite{SCHWARTZ_PMM}.  Suppose $AB$, $BC$, and $CD$ are three
consecutive edges of a polygon $P$.  The {\em projective midpoint} of $BC$
(with respect to the polygon $P$) is defined as $S:=\overleftrightarrow{BC} \cap
\overleftrightarrow{QR}$, where $Q = \overleftrightarrow{AB} \cap
\overleftrightarrow{CD}$ and $R = \overleftrightarrow{AC} \cap
\overleftrightarrow{BD}$.  

\begin{figure}[h!]\label{FIG:PROJ_MIDPOINT}
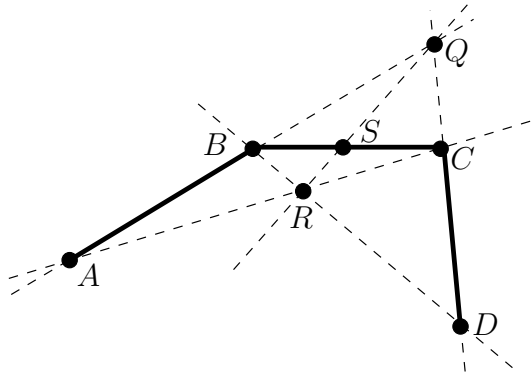
\caption{$S$ is the projective midpoint of $BC$}
\end{figure}

\noindent
The projective midpoint of $BC$ typically does not coincide with the
Euclidean midpoint of $BC$, however the construction is invariant under projective transformations.
Remark also that the projective midpoint may not be defined for certain degenerate configurations of $A,B,C,$ and $D$.

For any $N$-gon $P$ let $H(P)$ be the $N$-gon whose vertices are the projective
midpoints of the edges of $P$.  Because the construction is invariant under
projective transformations, $H$ descends to a mapping $H:
\mathcal{P}_N \dashrightarrow \mathcal{P}_N$ called
the {\em projective heat map}.  (We used a broken arrow
to denote that $H$ may not be defined at certain polygons for which a sequence
of four consecutive vertices $A,B,C$, and $D$ are not in general position.)

In the case of pentagons, the space of projective equivalence classes of polygons $\mathcal{P}_5$ is parameterized by a pair of real numbers $(x,y)$ called the {\em flag invariants} of the (equivalence class) of the polygon; see \cite[Section 3.6]{SCHWARTZ_PMM}.  In these flag coordinates, the projective
heat mapping becomes a rational mapping $H: \mathbb{R}^2 \dashrightarrow \mathbb{R}^2$ given by $(x',y') = H(x,y)$, where
\begin{align}\label{EQN:DEF_MAP}
x' &= {\frac { \left( x{y}^{2}+2\,xy-3 \right)  \left( {x}^{2}{y}^{2}-6\,xy-
x+6 \right) }{ \left( x{y}^{2}+4\,xy+x-y-5 \right)  \left( {x}^{2}{y}^
{2}-6\,xy-y+6 \right) }}, \quad \mbox{and} \\
y' &= {\frac { \left( {x}^{2}y+2\,xy-3 \right)  \left( {x}^{2}{y}^{2}-6\,xy-
y+6 \right) }{ \left( {x}^{2}y+4\,xy-x+y-5 \right)  \left( {x}^{2}{y}^
{2}-6\,xy-x+6 \right) }}. \nonumber
\end{align}
This mapping has an obvious symmetry under the reflection $R(x,y) = (y,x)$.  It
also has a less-obvious symmetry under an action of the dihedral group $D_5$,
corresponding to relabeling the vertices of the pentagon under rotations and reflections.
Expressed as a group of birational mappings of the flag coordinates  $(x,y) \in \R^2$ this action of $D_5$ is
called the Gauss Group $\Gamma$; see \cite[Section 3.8]{SCHWARTZ_PMM}.

In the monograph \cite{SCHWARTZ_PMM} Schwartz uses computer-assisted proofs to provide a nearly complete description of the dynamics
of $H: \mathbb{R}^2 \dashrightarrow \mathbb{R}^2$.  Highlights of his work include proofs that
\begin{enumerate}
\item Almost any projective equivalence class of a pentagon has orbit under $H$ converging to the class of the regular pentagon \cite[Corollary 1.7]{SCHWARTZ_PMM}.  (In flag coordinates, this attracting fixed point is represented by $(\phi^{-1},\phi^{-1})$, where $\phi$ is the golden ratio.)
\item There is a repelling invariant Cantor set $\mathcal{JC}$ for $H$ on which the dynamics of $H$ is conjugate to the one-sided shift $\sigma\colon\Sigma_6\rightarrow\Sigma_6$ on six symbols \cite[Theorem 1.5]{SCHWARTZ_PMM}.
\item $H$ does not have an invariant rational fibration, i.e., there is no nontrivial pair $(h,f)$ of rational functions $f\colon\mathbb R^2\dashrightarrow\mathbb R$ and $h\colon\mathbb R\dashrightarrow\mathbb R$ such that $f\circ H=h\circ f$ \cite[Section 15.7]{SCHWARTZ_PMM}.  This can be interpreted as saying the dynamics of $H$ are truly two-dimensional.
\end{enumerate}
Moreover, much of Schwartz's work is dedicated to giving a 
topological description of the ``Julia set'' of $H$ defined by $\mathcal{J} :=
\mathbb{R}^2 \setminus \W^s((\phi^{-1},\phi^{-1}))$.  A computer generated image
of this Julia set is shown in Figure \ref{FIG_JULIA}.

\begin{figure}
\includegraphics[scale=0.4]{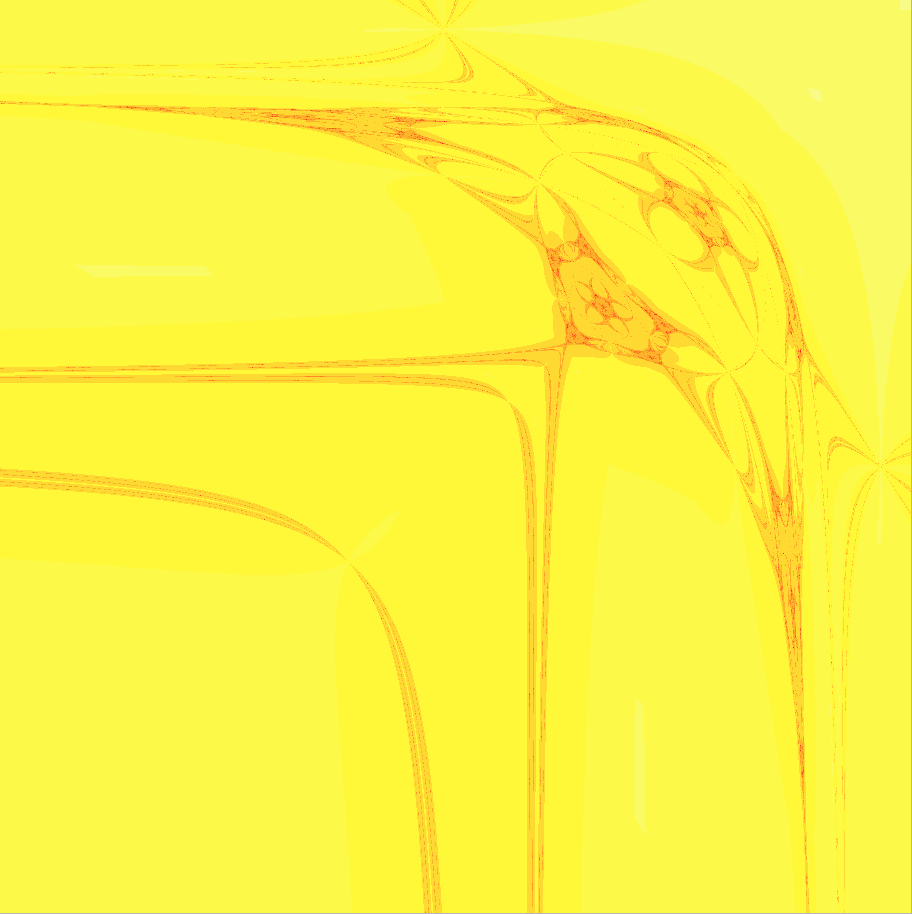}
\caption{\label{FIG_JULIA} The region $(x,y) \in [-9,\phi^{-1}]^2$, where $\phi$ is the golden ratio.  Points in the basin of attraction $\W^s((\phi^{-1},\phi^{-1}))$ of the attracting fixed point $(\phi^{-1},\phi^{-1})$ 
are colored yellow.  The ``Julia Set'' $\mathcal{J}:=\mathbb{R}^2 \setminus \W^s((\phi^{-1},\phi^{-1}))$ is visible in red.}
\end{figure}

Schwartz often 
extends $H$ to the compactification $\mathbb{RP}^1 \times \mathbb{RP}^1$ and sometimes to the
surface $S_\mathbb{R}$ obtained by blowing up $\mathbb{RP}^1 \times \mathbb{RP}^1$ at the
three points $(1,1)$, $(\infty,0)$, and $(0,\infty)$.  The surface $S_\mathbb{R}$ is
well-adapted to the symmetries of $H$ under the Gauss Group $\Gamma$,  since $\Gamma$ 
acts on $S_\mathbb{R}$ by diffeomorphisms; See \cite[Section 10.5]{SCHWARTZ_PMM}.

\vspace{0.1in}
The formula for $H$ naturally extends to a rational map $H: \CP^1 \times
\CP^1 \dashrightarrow \CP^1 \times \CP^1$ or even as a rational self-map $H: S
\dashrightarrow S$, where $S$ is the blow-up of  $\CP^1 \times \CP^1$ at
$(1,1)$, $(\infty,0)$, and $(0,\infty)$.  Starting in the 1990s, Briend-Duval,
Fornaess-Sibony, Hubbard-Oberst-Vorth, Hubbard-Papadapol, Ueda, and many others
used powerful tools from complex analysis and algebraic geometry to prove
strong results about the dynamics of such mappings.  {\em The purpose of this note is to
place Schwartz's work on the real dynamics of the  projective heat mapping
$H$ into this complex perspective and to glean some corollaries both about the dynamics of $H$.}
\vspace{0.1in}

The coarsest invariants of the dynamics of a rational self-mapping $f: X \dashrightarrow X$ of a (complex) projective surface $X$ are the dynamical degrees 
$\lambda_1(f)$ and $\lambda_2(f)$.  The first dynamical degree $\lambda_1(f)$ is defined by
\begin{align*}
\lambda_1(f) := \lim_{n \rightarrow \infty} \left\|(f^n)^* : {\rm H}^{1,1}(X;\C) \rightarrow {\rm H}^{1,1}(X;\C)\right\|^{1/n},
\end{align*}
where ${\rm H}^{1,1}(X;\mathbb{C})$ denotes the Dolbeault cohomology of bidegree
$(1,1)$.  (In the case considered in this note, the reader can replace it with
singular cohomology ${\mathrm H}^2(X;\mathbb{C})$.) There are many subtleties
to this definition, including the fact that the rational map $f$ may not be
continuous at a finite set of indeterminate points.  Nevertheless, there is
still a well-defined notion of pullback on cohomology.  The caveat is that,
unlike for continuous mappings, this pullback may not be functorial---one may
have $(f^n)^* \neq (f^*)^n$.  This often makes calculation of $\lambda_1(f)$
rather technical.

Since we are working in complex dimension two, the second dynamical degree is
much simpler: $\lambda_2(f) = {\rm deg}_{\rm top}(f)$ is the the number of
preimages of a generic point under $f$.  We refer the reader to
\cite{BEDFORD_DYN_DEG,FRIEDLAND,RUSS_SHIFF,DILLER_FAVRE,DS_BOUND,DS_REGULARIZATION}
for further background on dynamical degrees.  The facts we will need about them
will be summarized in Section \ref{SEC_DYN_DEG}.

The two cases $\lambda_2(f) > \lambda_1(f)$ and $\lambda_1(f) > \lambda_2(f)$
correspond to quite different types of dynamics.  Building on work of
Briend-Duval \cite{briend1999exposants,briend2001deux}, Guedj and
Dinh-Truong-Nguyen  \cite{GUEDJ,DNT_degree} proved that the case $\lambda_2(f)
> \lambda_1(f)$ corresponds to ``predominantly repelling dynamics.''
Meanwhile, building from work of Bedford-Lyubich-Smillie, Diller-Dujardin-Guedj
\cite{DDG} proved that the case $\lambda_1(f) > \lambda_2(f)$ corresponds to
``predominantly saddle-type dynamics.''  We refer the reader to the cited papers
for details.

It was proved by Schwartz \cite[Section 8]{SCHWARTZ_PMM} that $\lambda_2(H) = 6$.  Our main technical result is

\begin{theorem}\label{THM:MAIN} $\lambda_1(H) = 4$.
\end{theorem}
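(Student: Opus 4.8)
The plan is to compute $\lambda_1(H)$ by the standard recipe for rational surface maps: exhibit a smooth projective model on which $H$ becomes \emph{algebraically stable} --- so that $(H^n)^* = (H^*)^n$ on cohomology --- and then identify $\lambda_1(H)$ with the spectral radius of $H^*$ acting on $\mathrm{H}^{1,1}$ of that model. The natural first candidate is Schwartz's surface $S$, the blow-up of $\CP^1\times\CP^1$ at $(1,1)$, $(\infty,0)$, and $(0,\infty)$, whose $\mathrm{H}^{1,1}(S;\C)$ is the rank-$5$ space spanned by the two ruling classes $h_1,h_2$ and the exceptional classes $E_1,E_2,E_3$, with the usual intersection form ($h_i^2 = 0$, $h_1\cdot h_2 = 1$, $E_i\cdot E_j = -\delta_{ij}$, $h_i\cdot E_j = 0$).

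The first, easy, step is to record $H^*$ on $\mathrm{H}^{1,1}(\CP^1\times\CP^1)$: the level curves $\{x'=c\}$ and $\{y'=c\}$ of the coordinate functions in \eqref{EQN:DEF_MAP} have bidegrees $(3,4)$ and $(4,3)$, so $H^*$ is the matrix $\left(\begin{smallmatrix} 3 & 4 \\ 4 & 3\end{smallmatrix}\right)$, of spectral radius $7$. Since the answer we are after is $4<7$, $H$ cannot be algebraically stable on $\CP^1\times\CP^1$; the deficit records curves contracted by $H$ whose orbit is forced through an indeterminacy point, and resolving this is the real content. Concretely I would (i) verify that $(1,1)$, $(\infty,0)$, $(0,\infty)$ are genuine indeterminacy points of $H$ --- e.g. at $(1,1)$ each of the four factors in the numerators and denominators of \eqref{EQN:DEF_MAP} vanishes --- and locate any remaining indeterminacy points; (ii) determine the finitely many curves contracted by $H$ together with the point to which each is sent; one already sees, for instance, that $\{x^2y^2 - 6xy - y + 6 = 0\}$ is a common component of the denominator of $x'$ and the numerator of $y'$, so it collapses to $(\infty,0)$, and symmetrically $\{x^2y^2 - 6xy - x + 6 = 0\}$ collapses to $(0,\infty)$; (iii) lift everything to $S$, computing $H^*E_i$ and the corrected classes $H^*h_j = 3h_j + \cdots - \sum_i m_{ji}E_i$ from the multiplicities $m_{ji}$ of the generic level curves at the three blown-up points. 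The output is an explicit $5\times 5$ integer matrix for $H^*$ on $\mathrm{H}^{1,1}(S;\C)$.

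To turn this matrix into $\lambda_1$, I then need algebraic stability of $H\colon S\dashrightarrow S$, which by the Diller-Favre criterion reduces to checking that the forward orbit of each contracted curve --- viewed as a point --- never meets the indeterminacy set of $H$. Here I would lean on the symmetries: the reflection $R(x,y) = (y,x)$ and the Gauss group $\Gamma\cong D_5$ act on $S$ and commute with $H$, so they permute the contracted curves and the indeterminacy points and reduce the verification to a handful of representative orbits, which I expect to be short (ultimately periodic). The same symmetries block-diagonalize $H^*$ --- already $R^*$ splits $\mathrm{H}^{1,1}(S;\C)$ into the $3$-dimensional eigenspace $\langle h_1+h_2, E_1, E_2+E_3\rangle$ and the $2$-dimensional eigenspace $\langle h_1-h_2, E_2-E_3\rangle$ --- which is what I expect to make the characteristic polynomial factor and exhibit $4$ as its largest root, giving $\lambda_1(H) = 4$. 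Should $S$ fail to be algebraically stable, the standard remedy is to blow up the finitely many points in the offending (periodic) orbits and repeat on the resulting surface; computing the new matrix and its spectral radius is then the same bookkeeping.

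The step I expect to be the main obstacle is (ii)--(iii): honestly cataloguing every contracted curve together with its image, and correctly computing the local multiplicities at the indeterminacy points. These are delicate local computations --- it is easy to overlook a component of the critical locus or miscount a multiplicity, and either slip changes the matrix and hence the final number. By contrast, once the matrix and the symmetry decomposition are in hand, the algebraic-stability check and the eigenvalue computation are finite and routine.
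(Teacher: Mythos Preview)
Your plan is correct and follows essentially the same route as the paper: blow up $\CP^1\times\CP^1$ at $(1,1)$, $(\infty,0)$, $(0,\infty)$, verify algebraic stability of the lift $\H$ on the resulting surface $S=\X$, and compute the $5\times5$ matrix for $\H^*$ in the basis $\pi^*[L_x],\pi^*[L_y],[E_1],[E_2],[E_3]$, whose characteristic polynomial turns out to be $-(4-\lambda)(1+\lambda)^4$. The only notable simplification the paper finds over your outline is that on $\X$ the lifted map $\H$ collapses \emph{no} curves at all---each of the four collapsed curves of $H$ (your two plus $\{xy-1=0\}$ and $\{2xy+x+y-4=0\}$, both sent to $(1,1)$) is mapped by $\H$ onto one of the exceptional divisors, and each $E_i$ itself has non-constant image---so algebraic stability is immediate, your orbit-tracking step and contingency of further blow-ups never arise, and your symmetry block-diagonalization, while valid, is unnecessary since the characteristic polynomial already factors over~$\mathbb{Z}$.
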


Therefore, the projective heat map $H$ has ``large topological degree''
and the work of Guedj and Dinh-Truong-Nguyen  \cite{GUEDJ,DNT_degree} implies
that $H: \CP^1 \times \CP^1 \dashrightarrow  \CP^1 \times \CP^1$ has a unique
measure $\mu$ of maximal entropy, whose entropy is equal to $\log \lambda_2(H)
= \log 6$.  It also provides a lower bound on the Lyapunov exponents $\chi_{1,2}$ of $\mu$
by 
\begin{align*}
\chi_{1,2}  \geq \frac{1}{2} \log \frac{\lambda_2(H)}{\lambda_1(H)}  = \frac{1}{2} \log \frac{3}{2}.
\end{align*}
Meanwhile, the Bernoulli measure on $\Sigma_6$ is an invariant measure of
entropy $\log 6$ for the one-sided full shift on six symbols $\sigma: \Sigma_6 \rightarrow \Sigma_6$.   Pulling it back
under the the conjugacy 
between $H: \mathcal{JC} \rightarrow \mathcal{JC}$ and $\sigma: \Sigma_6
\rightarrow \Sigma_6$ results in an invariant measure $\beta$ for $H:
\mathcal{JC} \rightarrow \mathcal{JC}$ which also has entropy $\log 6$.  We
will call $\beta$ the ``Bernoulli measure on $\mathcal{JC}$.''  We conclude that:

\begin{corollary}The Bernoulli measure $\beta$ on $\mathcal{JC} \subset \mathbb{RP}^1 \times \mathbb{RP}^1$ is the unique
measure of maximal entropy for $H: \CP^1 \times \CP^1 \dashrightarrow \CP^1
\times \CP^1$.  Its Lyapunov exponents are bounded from below by $\frac{1}{2} \log
\frac{3}{2}$.  \end{corollary}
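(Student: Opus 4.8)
The plan is to identify the Bernoulli measure $\beta$ with the measure of maximal entropy $\mu$ furnished by the theory of large topological degree maps, and then simply transport the conclusions about $\mu$ back to $\beta$.

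First I would assemble the two inputs. Schwartz's computation $\lambda_2(H) = 6$ \cite[Section 8]{SCHWARTZ_PMM} together with Theorem \ref{THM:MAIN} give $\lambda_2(H) = 6 > 4 = \lambda_1(H)$, so $H$ has large topological degree. As recorded in the discussion preceding the statement, the work of Guedj \cite{GUEDJ} and of Dinh-Truong-Nguyen \cite{DNT_degree} then provides a \emph{unique} invariant probability measure $\mu$ of maximal entropy for $H\colon \CP^1\times\CP^1 \dashrightarrow \CP^1\times\CP^1$, with $h_\mu(H) = \log\lambda_2(H) = \log 6$, whose Lyapunov exponents satisfy $\chi_{1,2} \geq \frac{1}{2}\log\frac{\lambda_2(H)}{\lambda_1(H)} = \frac{1}{2}\log\frac{3}{2}$.

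Next I would argue that $\beta$ is itself a measure of maximal entropy for $H$. By \cite[Theorem 1.5]{SCHWARTZ_PMM}, $\mathcal{JC} \subset \mathbb{RP}^1\times\mathbb{RP}^1$ is a compact repelling set, $H$ is (rational, hence) holomorphic on a neighborhood of it, and $H|_{\mathcal{JC}}$ is topologically conjugate to the full one-sided shift $\sigma\colon\Sigma_6\to\Sigma_6$; in particular $\mathcal{JC}$ is disjoint from the indeterminacy locus of $H$, and, being forward invariant, it is disjoint from the indeterminacy locus of every iterate $H^n$. Hence the push-forward $\beta$ of the uniform Bernoulli measure on $\Sigma_6$ is a genuine $H$-invariant Borel probability measure, and since metric entropy is a conjugacy invariant while the uniform Bernoulli measure on $\Sigma_6$ has metric entropy $\log 6$, we get $h_\beta(H) = \log 6 = h_\mu(H)$. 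Thus $\beta$ attains the maximal entropy, so by the uniqueness in the previous paragraph $\beta = \mu$; in particular the lower bound $\chi_{1,2} \geq \frac{1}{2}\log\frac{3}{2}$ on the Lyapunov exponents of $\mu$ is exactly the bound asserted for $\beta$, and the corollary follows.

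The step demanding the most care is the identification $\beta = \mu$, specifically checking that the uniqueness statement of \cite{GUEDJ,DNT_degree} really applies to $\beta$. The potential worry is that $\beta$ is carried by $\mathcal{JC}$, a Cantor set sitting inside the real locus $\mathbb{RP}^1\times\mathbb{RP}^1$ and hence a ``thin'' subset of the four-dimensional ambient space, whereas one might a priori expect the measure of maximal entropy of a large topological degree map to be spread over a more genuinely four-dimensional set. This is not an obstacle, because the uniqueness in \cite{GUEDJ,DNT_degree} is among \emph{all} invariant probability measures of maximal entropy, with no such restriction; the only points that must be verified are the routine facts that $\mathcal{JC}$ avoids the indeterminacy sets (so that $\beta$ is honestly $H$-invariant and the entropy theory for the meromorphic map $H$ applies to it) and that the metric entropy of $\beta$ computed from the shift model agrees with its metric entropy as an $H$-invariant measure---both immediate from the conjugacy being a homeomorphism intertwining $H|_{\mathcal{JC}}$ with $\sigma$.
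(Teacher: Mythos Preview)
Your argument is correct and follows exactly the same route as the paper: use $\lambda_2(H)=6>4=\lambda_1(H)$ together with the results of Guedj and Dinh--Nguyen--Truong to obtain a unique measure of maximal entropy $\mu$ with $h_\mu(H)=\log 6$ and the stated Lyapunov bound, observe via Schwartz's conjugacy that $\beta$ is an invariant measure of entropy $\log 6$, and conclude $\beta=\mu$ by uniqueness. The extra care you take in noting that $\mathcal{JC}$ avoids the indeterminacy locus is a welcome detail, but the strategy is identical to the paper's.
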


The dynamical degrees of rational maps having an invariant fibration were
studied by Dinh-Nguyen and Dinh-Nguyen-Truong in \cite{NT,DNT_fibration}.
Their work implies that if a rational self-map of a surface has an invariant
fibration, then $\lambda_1(f)$ divides $\lambda_2(f)$; see Lemma
\ref{LEM_DYN_DEG_FIBRATION}, below.  We therefore also conclude that:

\begin{corollary}\label{COR_NO_FIBRATION}The projective heat map $H: \CP^1 \times \CP^1
\dashrightarrow \CP^1 \times \CP^1$ has no invariant fibration.
\end{corollary}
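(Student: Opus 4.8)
The plan is to derive Corollary \ref{COR_NO_FIBRATION} directly from the two dynamical degrees together with the divisibility constraint that an invariant fibration would impose. The inputs are Theorem \ref{THM:MAIN}, which gives $\lambda_1(H) = 4$, and Schwartz's computation \cite[Section 8]{SCHWARTZ_PMM}, which gives $\lambda_2(H) = 6$.

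I would then argue by contradiction. Suppose $H : \CP^1 \times \CP^1 \dashrightarrow \CP^1 \times \CP^1$ preserved a fibration, i.e., there were a dominant rational map $\pi : \CP^1 \times \CP^1 \dashrightarrow B$ onto a curve $B$ with irreducible generic fiber, together with a rational map $g : B \dashrightarrow B$ satisfying $\pi \circ H = g \circ \pi$. By Lemma \ref{LEM_DYN_DEG_FIBRATION} (extracted from the work of Dinh--Nguyen and Dinh--Nguyen--Truong \cite{NT,DNT_fibration}) this would force $\lambda_1(H)$ to divide $\lambda_2(H)$, that is, $4 \mid 6$, which is absurd. Hence no invariant fibration exists.

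Once Theorem \ref{THM:MAIN} is available there is essentially no obstacle here: the deduction is a single divisibility check, and this is precisely why it is worth pinning down the exact value $\lambda_1(H)=4$ rather than merely a bound. The only points requiring a small amount of care are that the notion of invariant fibration in the statement matches the hypotheses of Lemma \ref{LEM_DYN_DEG_FIBRATION}, and that the choice of birational model is irrelevant --- which it is, since $\lambda_1$ and $\lambda_2$ are birational invariants and an $H$-invariant fibration on $\CP^1 \times \CP^1$ corresponds, via the blow-down $S \to \CP^1 \times \CP^1$, to one invariant under the lift of $H$ to $S$. I would also remark that this sharpens Schwartz's real result \cite[Section 15.7]{SCHWARTZ_PMM}, since a nontrivial real invariant rational fibration would complexify to a complex one, which is now ruled out.
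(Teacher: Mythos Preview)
Your proposal is correct and matches the paper's own argument exactly: the paper simply notes after Lemma~\ref{LEM_DYN_DEG_FIBRATION} that ``Corollary~\ref{COR_NO_FIBRATION} follows,'' relying on $\lambda_1(H)=4$, $\lambda_2(H)=6$, and the divisibility obstruction. Your additional remarks on birational invariance and the relation to Schwartz's real result are also consistent with the paper's discussion surrounding the corollary.
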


\noindent
Remark that if $H$ preserved a real fibration $(f,g)$, their complexifications would serve as an invariant complex fibration for $H$. In particular, $H: \mathbb{RP}^1 \times \mathbb{RP}^1 \dashrightarrow \mathbb{RP}^1 \times \mathbb{RP}^1$ does not have an invariant fibration, thus giving a cohomological re-proof of one of Schwartz's results.

\begin{remark}
Computation of the first dynamical degree of a planar rational map is relatively standard for specialists in several variable complex dynamics.  However, the purpose of this note is to relate the dynamical consequences of this calculation
to Schwartz's work.  In order to make this paper more accessible to readers outside the field of several variables complex dynamics, we have intentionally included many details that otherwise might have been omitted.
\end{remark}

Section \ref{SEC_DYN_DEG} contains background material on dynamical degrees for
rational maps of surfaces and how they can be used to rule out invariant
fibrations.  Section \ref{SEC_AS} and \ref{SEC_PULLBACK} are devoted to the
calculation that $\lambda_1(H) = 4$, i.e. to proving Theorem \ref{THM:MAIN}.
There are many interesting further questions about the complex dynamics of the
projective heat map.  Therefore, we conclude in Section \ref{SEC_QUESTIONS} by
describing a couple of interesting directions for further study.

\vspace{0.1in}

\noindent
{\bf Acknowledgments:} We thank the referee for several helpful comments and Joseph Silverman for his comments on polarized dynamical systems. The second author was supported by NSF grant DMS-1348589.

\section{Background on dynamical degrees and algebraic stability}
\label{SEC_DYN_DEG}

In this section we will give the background needed for computing
$\lambda_1(H)$.  Let us focus on a somewhat restricted context suitable for
this paper.    Suppose $X$ and $Y$ are complex projective algebraic surfaces
(e.g. a blow-up of $\CP^1 \times \CP^1$ at finitely many points) and $f: X
\dashrightarrow Y$ is a rational map.  We denote the indeterminacy locus of $f$
by $I(f)$, which is a finite set of points.  Throughout the section we will
suppose $f$ is {\em dominant}, meaning that $f(X \setminus I(f))$ is not
contained in an algebraic hypersurface of $Y$.

\subsection{Pullback on cohomology}

It is a well-known fact \cite[Ch. IV, \S 3.3]{SHAF} there is a finite sequence
of blow-ups $\pi: \widetilde{X} \rightarrow X$ so that $f$ lifts to a holomorphic
map $\tilde{f}: \widetilde{X} \rightarrow Y$, making the following diagram
commute
\begin{eqnarray}\label{RESOLUTION}
\xymatrix{
\widetilde{X} \ar[d]^\pi \ar[dr]^{\tilde{f}} & \\
X  \ar @{-->}[r]^{f} & Y,
}
\end{eqnarray}
wherever $f \circ \pi$ is defined.  Each of these blow-ups is done over a point of $I(f)$.

One uses (\ref{RESOLUTION}) to define $f^*: {\rm H}^{1,1}(Y;\C) \rightarrow {\rm H}^{1,1}(X;\C)$ by
\begin{align}\label{DEF_PULLBACK}
f^*(\alpha) := \pi_* (\tilde{f}^* \alpha)
\end{align}
for any $\alpha \in {\rm H}^{1,1}(Y;\C)$.  Here, $\pi_*: {\rm H}^{1,1}(\tilde{X};\C)
\rightarrow {\rm H}^{1,1}(X;\C)$ is defined by $$\pi_* := {\rm PD}_{X}^{-1} \circ
\pi_\# \circ {\rm PD}_{\tilde{X}},$$ with $\pi_\#$ denoting the push forward on
homology and ${\rm PD}$ denoting the Poincar\'e duality isomorphism.  The
definition of the pullback $f^*$ is well-defined, independent of the choice of
resolution of indeterminacy (\ref{RESOLUTION}); see, for example,
\cite[Lemma~3.1]{ROEDER_FUNC}.

If $C \subset X$ is an irreducible algebraic curve, then, since $C$ has real dimension two and is singular at only
finitely many points, it has a well-defined fundamental homology class $\{C\}$
and cohomology class $[C] = {\rm PD}_X^{-1} (\{C\})$.  From a more
sophisticated point of view, $C$ defines a locally principal divisor $(C)$ and
$[C]$ is its Chern class; \cite{GH}.  The definition
of $f^*$ simplifies in this case:

\begin{lemma} \label{LEM:PULLBACK_ALG_CURVE}
 Let $f: X \dashrightarrow Y$ be a dominant rational map between to projective
surfaces.  Suppose $C \subset Y$ is an irreducible algebraic curve.  Then,
\begin{align}\label{PULLBACK_OF_ALGEBRAIC_CURVES}
f^* [C] = \sum_{\substack{D \subset f^{-1}(C) \\ \mbox{irreducible}}} m_D [D],
\end{align}
where $f^{-1}(C) = \overline{{\left(f|_{X \setminus I(f)}\right)^{-1} C}}$ and the
multiplicity $m_D$ is the order of vanishing of $\psi \circ f$ at any smooth
point $p \in D \setminus I(f)$, with $\psi$ being a local defining equation for $C$
at $f(p)$ (chosen to vanish to order $1$ at smooth points of $C$).
\end{lemma}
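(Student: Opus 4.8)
The plan is to reduce everything to the case of a holomorphic map by using the resolution of indeterminacy $\pi\colon\widetilde X\to X$ and $\tilde f\colon\widetilde X\to Y$ from~(\ref{RESOLUTION}), together with the definition $f^*[C]=\pi_*(\tilde f^*[C])$. First I would record the classical fact that for a \emph{holomorphic} surjective map $g\colon V\to W$ between smooth projective surfaces and an irreducible curve $C\subset W$, one has $g^*[C]=\sum_{E\subset g^{-1}(C)} \mathrm{ord}_E(\psi\circ g)\,[E]$, where the sum ranges over irreducible components $E$ of $g^{-1}(C)$ and $\psi$ is a local defining function for $C$; this is simply the statement that the pullback of the line bundle $\mathcal O_W(C)$ with its section cutting out $C$ is $\mathcal O_V(g^*(C))$, i.e.\ $g^*(C)=\sum \mathrm{ord}_E(\psi\circ g)\,E$ as divisors, and then passing to Chern classes. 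Applying this to $\tilde f$ gives $\tilde f^*[C]=\sum_{E} m_E\,[E]$ over the irreducible components $E$ of $\tilde f^{-1}(C)\subset\widetilde X$, with $m_E$ the order of vanishing of $\psi\circ\tilde f$ along $E$.

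Next I would apply $\pi_*$ and analyze which components survive. Split the components $E$ of $\tilde f^{-1}(C)$ into two types: those not contained in the exceptional locus $\mathrm{Exc}(\pi)$, and those contained in it. For a component $E\not\subset\mathrm{Exc}(\pi)$, $\pi$ restricts to a birational morphism $E\to \pi(E)=:D$, so $\pi_*[E]=[D]$; moreover $D$ is precisely an irreducible component of $f^{-1}(C)=\overline{(f|_{X\setminus I(f)})^{-1}C}$, and conversely every such component $D$ arises this way from its strict transform. The multiplicity matches: picking a smooth point $p\in D\setminus I(f)$, $\pi$ is a local isomorphism near the corresponding point of $E$, so $\mathrm{ord}_E(\psi\circ\tilde f)=\mathrm{ord}_D(\psi\circ f)=m_D$ as in the statement, and one checks this order is independent of the chosen smooth point $p$ since $D\setminus(I(f)\cup\mathrm{Sing}(D))$ is connected. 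For a component $E\subset\mathrm{Exc}(\pi)$, $\pi(E)$ is a point, hence $\pi_\#\{E\}=0$ in homology and so $\pi_*[E]=0$; these contribute nothing. Summing, $f^*[C]=\pi_*(\tilde f^*[C])=\sum_{D}m_D[D]$ over the irreducible components $D$ of $f^{-1}(C)$, which is~(\ref{PULLBACK_OF_ALGEBRAIC_CURVES}).

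The main obstacle, and the point requiring the most care, is the bookkeeping in the second step: making sure that the map ``$E\mapsto\pi(E)$'' sets up a genuine bijection between $\pi$-non-exceptional components of $\tilde f^{-1}(C)$ and irreducible components of the closure $f^{-1}(C)$, and that no component of $f^{-1}(C)$ is accidentally missed (e.g.\ because part of it sits over $I(f)$) or double-counted. This is handled by noting that $\widetilde X\setminus\pi^{-1}(I(f))\to X\setminus I(f)$ is an isomorphism, so strict transforms give the inverse correspondence, and that taking closures commutes appropriately since $\pi$ is proper. A secondary technical point is the well-definedness of $m_D$ independent of the smooth test point $p\in D\setminus I(f)$ and independent of the choice of local equation $\psi$ (two such differ by a local unit, which does not change the order of vanishing); both follow from the order of vanishing of a fixed meromorphic function along a fixed divisor being a well-defined integer. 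Everything else is the standard translation between divisors, line bundles, and $(1,1)$-classes.
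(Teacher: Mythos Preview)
Your proposal is correct and follows essentially the same approach as the paper: reduce to the holomorphic case via the resolution of indeterminacy, use the divisor/line-bundle/Chern-class correspondence to get the formula for $\tilde f^*[C]$, then apply $\pi_*$ and sort components according to whether they are $\pi$-exceptional, matching multiplicities via the local isomorphism away from $I(f)$. The paper's proof is organized identically; your added remarks on well-definedness of $m_D$ and on the bijection via strict transforms are just slightly more explicit versions of what the paper does.
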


\begin{proof}
For a holomorphic map $h: X \rightarrow Y$ the pullback
action on locally principal divisors and on cohomology are related by the commutative diagram:
\begin{eqnarray}\label{CHERN}
\xymatrix{
{\rm H}^1(Y,\mathcal{O}^*) \ar[d]^c \ar[r]^{h^*} & {\rm H}^1(X,\mathcal{O}^*) \ar[d]^c \\
{\rm H}^1(Y;\C)  \ar[r]^{h^*} & {\rm H}^1(X;\C),
}
\end{eqnarray}
where $\mathcal{O}^*$ denotes the sheaf of germs of non-vanishing holomorphic
functions and the vertical arrows denote taking the Chern class, see \cite[p.
139]{GH}.  Since (\ref{PULLBACK_OF_ALGEBRAIC_CURVES}) is a special case of the
definition of how one pulls back locally principal divisors, this diagram
justifies (\ref{PULLBACK_OF_ALGEBRAIC_CURVES}) in the case of a holomorphic map.

Now, suppose $f: X \dashrightarrow Y$ is rational, that we have the resolution
of indeterminacy $\tilde{f}: \tilde{X} \rightarrow Y$ as in (\ref{RESOLUTION}),
and let $C \subset Y$ be an irreducible algebraic curve.  By the discussion in
the previous paragraph, (\ref{PULLBACK_OF_ALGEBRAIC_CURVES}) applies to
$\tilde{f}: \tilde{X} \rightarrow Y$ giving
\begin{align}\label{TILDE_PULLBACK0}
\tilde{f}^*[C] = \sum_{\substack{A \subset \tilde{f}^{-1}(C) \\ \mbox{irreducible}}} m_A [A]
\end{align}
with $m_A$ being the order of vanishing of $\psi \circ \tilde{f}$ at any smooth point of $A$.
Therefore, we have
\begin{align}\label{TILDE_PULLBACK}
f^* [C] = \pi_* \tilde{f}^*[C] = \sum_{\substack{A \subset \tilde{f}^{-1}(C) \\ \mbox{irreducible}}} m_A \pi_* [A].
\end{align}
If $A$ is an irreducible component of $\tilde{f}^{-1}(C)$ with $\pi(A)$ a
single point, then by the definition of push forward on homology we have $\pi_*
[A] = 0$.  Otherwise, $\pi(A)$ is an irreducible algebraic curve in $X$.  In
this case, $A$ intersects the exceptional divisors of $\pi$ in finitely many
points, so that $\pi: A \rightarrow \pi(A)$ is one-to-one away from finitely
many points and hence $\pi_*[A] = [\pi(A)]$.  We conclude that
\begin{align}\label{TILDE_PULLBACK2}
f^* [C] = \sum_{\substack{A \subset \tilde{f}^{-1}(C) \, \,  \mbox{irreducible} \\ \mbox{$\pi(A)$ not a point}}} m_A [\pi(A)].
\end{align}
Suppose $p$ is any smooth point of $\pi(A) \setminus I(f)$.  Then, $\pi$ is a
biholomorphic map from a neighborhood $U$ of $\pi^{-1}(p)$ to a neighborhood
$V$ of $p$.  Therefore, if $\psi$ is a local defining equation for $C$ in a
neighborhood of $f(p) = \tilde{f}(\pi^{-1}(p))$, then the order of vanishing of $\psi
\circ f = \psi \circ \tilde{f} \circ \left(\pi|_U\right)^{-1}$ at $p$ is the same as that of
$\psi \circ \tilde{f}$ at $\pi^{-1}(p)$.  This allows us to replace the $m_A$
in (\ref{TILDE_PULLBACK2}) with $m_{\pi(A)}$.  We obtain
\begin{align}\label{TILDE_PULLBACK3}
f^* [C] = \sum_{\substack{D \subset \pi(\tilde{f}^{-1}(C)) \,\, \mbox{irreducible} \\ \mbox{$D$ not a point}}} m_D [D].
\end{align}
Finally, note that each of the blow-ups done to achieve the resolution of indeterminacy (\ref{RESOLUTION}) occurs over a point of $I(f)$.
Therefore, 
\begin{align*}
f^{-1}(C) = \overline{{\left(f|_{X \setminus I(f)}\right)^{-1} C}} \quad \mbox{and} \quad \pi \circ \tilde{f}^{-1}(C)
\end{align*}
differ by at most a subset of the finite set $I(f)$, giving a bijection between
irreducible components of $f^{-1}(C)$ and irreducible components of
$\pi(\tilde{f}^{-1}(C))$ that are not a point.  This gives
(\ref{PULLBACK_OF_ALGEBRAIC_CURVES})~from~(\ref{TILDE_PULLBACK3}).

\end{proof}

\subsection{Algebraic stability and strategy for computing $\lambda_1$}\label{SEC:STRATEGY}

\begin{proposition}\label{PROP:DF}{\rm (Diller-Favre \cite[Thm. 1.14]{DILLER_FAVRE}})
Let $f: X \dashrightarrow X$ by a rational self-map of a projective surface.
Then, $(f^n)^* = (f^*)^n$ for all $n \geq 1$ if and only if
there is no curve $C$ and no iterate $m$ such that $f^m(C \setminus I_{f^m})
\subset I_f$.
\end{proposition}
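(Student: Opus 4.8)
The plan is to reduce the statement to a composition rule for pullbacks under a single pair of maps, and then to bootstrap that rule to all iterates by an induction that pinpoints the first iterate at which a curve gets swallowed by the indeterminacy set. The engine will be the following standard composition property (part of the Diller--Favre machinery; cf.\ \cite{DILLER_FAVRE,ROEDER_FUNC}): for dominant rational maps $f\colon X\dashrightarrow Y$ and $g\colon Y\dashrightarrow Z$ between projective surfaces one has $(g\circ f)^{*}=f^{*}\circ g^{*}$ as operators on ${\rm H}^{1,1}$ \emph{provided} $f$ contracts no irreducible curve of $X$ to a point of $I(g)$; and if some irreducible curve $V\subset X$ \emph{is} contracted by $f$ to a point of $I(g)$, then the two operators are unequal.

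To prove this composition rule I would pass to a common model. First take a sequence of blow-ups $\pi\colon\Gamma\to X$ resolving $f$, so that $\tilde f:=f\circ\pi\colon\Gamma\to Y$ is holomorphic as in~(\ref{RESOLUTION}); then take a further blow-up $\rho\colon\Gamma'\to\Gamma$ together with a resolution $\varpi\colon\widehat Y\to Y$ of $g$, chosen large enough that in the resulting commutative square $\tilde f\circ\rho=\varpi\circ\hat f'$ all four maps are holomorphic and $\widetilde{g\circ f}=\tilde g\circ\hat f'$ for the resolved map $\tilde g\colon\widehat Y\to Z$. Since the pullback on cohomology is functorial for holomorphic maps and compatible with pullback of divisors via the Chern-class square~(\ref{CHERN}), this gives $(g\circ f)^{*}=(\pi\rho)_{*}(\hat f')^{*}\tilde g^{*}$ and $f^{*}g^{*}=\pi_{*}\tilde f^{*}\varpi_{*}\tilde g^{*}$. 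Comparing the two then reduces, via the projection formula, to comparing $\rho_{*}(\hat f')^{*}\beta$ with $\tilde f^{*}\varpi_{*}\beta$ for $\beta=\tilde g^{*}\alpha$: these differ only by a divisor class supported on the curves that $\tilde f$ contracts, and the local order-of-vanishing computation in the style of Lemma~\ref{LEM:PULLBACK_ALG_CURVE} shows the correction vanishes over points outside $I(g)$ and is a strictly positive combination of the contracted curves over a point of $I(g)$. The strict positivity is because at an indeterminacy point of $g$ the resolved map $\tilde g$ cannot contract the whole $\varpi$-exceptional fibre, so $\tilde g^{*}\alpha$ meets some component of it positively for ample $\alpha$; and since any curve on a projective surface has nonzero cohomology class, the two operators genuinely disagree in that case.

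Granting the composition rule, the ``if'' direction is a short induction on $n$ (the case $n=1$ being trivial): assuming no curve is contracted by any iterate $f^{m}$ ($m\ge1$) into $I(f)$, write $f^{n}=f\circ f^{\,n-1}$ and apply the rule with inner map $f^{\,n-1}$ and outer map $f$ — legitimate since $f^{\,n-1}$ contracts no curve into $I(f)$ — to obtain $(f^{n})^{*}=(f^{\,n-1})^{*}\circ f^{*}=(f^{*})^{n}$. For the ``only if'' direction I argue by contraposition: suppose some curve $C$ is contracted by some $f^{m}$ into $I(f)$, and let $m\ge1$ be minimal with this property. By minimality no curve is contracted by any of $f^{1},\dots,f^{\,m-1}$ into $I(f)$, so running the ``if'' induction only up to exponent $m$ gives $(f^{k})^{*}=(f^{*})^{k}$ for every $k\le m$ — note this never uses any hypothesis about $f^{m}$ itself. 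Applying the composition rule to $f^{\,m+1}=f\circ f^{m}$, the fact that $f^{m}$ contracts $C$ into $I(f)$ makes the rule fail, so $(f^{\,m+1})^{*}\ne(f^{m})^{*}\circ f^{*}=(f^{*})^{\,m+1}$, and $f$ is not algebraically stable.

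The main obstacle is entirely inside the composition rule, and specifically in its second half: showing that the discrepancy between $(g\circ f)^{*}$ and $f^{*}g^{*}$ is genuinely nonzero — not merely effective — whenever a curve is contracted into $I(g)$, so that in the contrapositive argument the failure of the rule at the iterate $m+1$ cannot be washed out by a cancellation. Organizing the tower of blow-ups so that both pullbacks live on one model is fiddly but routine; the nonvanishing of the correction term, which I would trace back to the fact that resolving an indeterminacy point never contracts its full exceptional fibre, is where the real content lies.
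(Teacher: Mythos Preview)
The paper does not give its own proof of this proposition: it is quoted as \cite[Thm.~1.14]{DILLER_FAVRE}, with the parenthetical remark that an algebraic-geometric proof may be found in \cite[Prop.~1.4]{ROEDER_FUNC}. There is thus nothing in the paper to compare your attempt against.

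That said, your sketch is exactly the standard route taken in those references: one first proves the two-map composition rule---$(g\circ f)^{*}=f^{*}g^{*}$ on ${\rm H}^{1,1}$ if and only if $f$ contracts no curve into $I(g)$---by resolving indeterminacy on a common model and tracking the discrepancy as an effective combination of contracted curves, and then one bootstraps to all iterates via the minimal-$m$ induction you describe. Your identification of the delicate point (that the correction term is \emph{strictly} positive when a curve lands in $I(g)$, so that the failure at iterate $m+1$ is genuine and not cancelled) is correct, and your heuristic for it---that the resolution of an indeterminate point of $g$ cannot contract its entire exceptional fibre---is the right picture. The sketch is sound; if you want to flesh it into a full proof, the cited references make each of these steps precise.
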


When either of the two equivalent conditions stated in Proposition
\ref{PROP:DF} hold, we will call $f: X \dashrightarrow X$ {\em algebraically
stable}.   (Readers who prefer to see a proof of Proposition \ref{PROP:DF} that is based on algebraic geometry instead of analysis 
can refer to \cite[Prop. 1.4]{ROEDER_FUNC}.)

If $C$ is a curve such that $f(C \setminus I_f)$ is a point, we will say that
$C$ is {\em collapsed by $f$}.   Since $I_f$ is a finite set of points, the
second equivalent condition for algebraic stability (from Proposition \ref{PROP:DF}) asserts that there is
no curve $C$ that is collapsed by an iterate of $f$ into $I_f$.
In particular, if $f$ does not collapse any curve, then $f$ is algebraically stable.

\begin{proposition}\label{PROP_CONGUGACY}{\rm (Dinh-Sibony \cite[Cor. 7]{DS_BOUND})} Suppose $f: X \dashrightarrow X$ and $g: Y \dashrightarrow Y$ are rational maps of projective surfaces that are conjugate by means of a birational map $\pi: X \dashrightarrow Y$.  Then, $\lambda_1(f) = \lambda_1(g)$.
\end{proposition}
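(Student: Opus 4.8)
The plan is to bypass the non-functoriality of the pullback — the fact that $(f^n)^*$ need not equal $(f^*)^n$ — by computing $\lambda_1$ through intersection numbers, and then to run the usual conjugation estimate. First I would fix, once and for all, a K\"ahler form $\omega_Z$ on each of the (finitely many) surfaces in play, and for a dominant rational map $h\colon Z_1 \dashrightarrow Z_2$ between them define the \emph{degree}
\begin{align*}
\delta(h)\ :=\ \int_{Z_1} h^*[\omega_{Z_2}] \wedge [\omega_{Z_1}],
\end{align*}
where $h^*$ on $H^{1,1}$ is the pullback from \eqref{DEF_PULLBACK}. Since $f^*$ maps the nef cone of $X$ into itself, $[\omega_X]$ lies in the interior of that cone, and $H^{1,1}(X;\C)$ is finite dimensional, an elementary compactness argument shows that $\delta(f^n)$ is comparable — up to multiplicative constants that depend only on $X$ and $\omega_X$, not on $n$ — to the operator norm $\|(f^n)^*\colon H^{1,1}(X;\C)\to H^{1,1}(X;\C)\|$ appearing in the definition of $\lambda_1$. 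Hence $\lambda_1(f) = \lim_{n\to\infty} \delta(f^n)^{1/n}$, and this reformulation is insensitive to whether $f$ is algebraically stable, which is precisely why it is the right quantity to work with here.

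The technical heart — and the step I expect to be the main obstacle — is the submultiplicativity of $\delta$ under composition: one needs a constant $C > 0$, depending only on the three surfaces and their fixed K\"ahler forms and \emph{not} on the maps, such that
\begin{align*}
\delta(h_2 \circ h_1)\ \le\ C\,\delta(h_1)\,\delta(h_2)
\end{align*}
for all dominant rational maps $h_1\colon Z_1 \dashrightarrow Z_2$ and $h_2\colon Z_2 \dashrightarrow Z_3$ of surfaces. This inequality is exactly where $(h_2\circ h_1)^* \neq h_1^*\circ h_2^*$ must be dealt with, and I would not attempt to reprove it: it is the substance of the Dinh--Sibony theory \cite{DS_BOUND, DS_REGULARIZATION}, established by lifting $h_2\circ h_1$ to a resolution on which it is holomorphic, representing the nef classes involved by positive closed $(1,1)$-currents, regularizing those currents, and controlling their masses via the projection formula together with the fact that on a surface the mass of a positive closed $(1,1)$-current is bounded in terms of its cohomology class. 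Specializing $h_1 = f^n$, $h_2 = f^m$ and applying Fekete's lemma to $n \mapsto \log\big(C\,\delta(f^n)\big)$ also shows that the limit defining $\lambda_1$ above actually exists.

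Granting this estimate, the conclusion is immediate. Writing $g = \pi \circ f \circ \pi^{-1}$ we have $g^n = \pi \circ f^n \circ \pi^{-1}$ for every $n \ge 1$, so applying the submultiplicativity bound twice — first to the composition $(\pi \circ f^n) \circ \pi^{-1}$, then to $\pi \circ f^n$ — gives
\begin{align*}
\delta(g^n)\ \le\ C^2\,\delta(\pi)\,\delta(\pi^{-1})\,\delta(f^n)\ =\ C'\,\delta(f^n),
\end{align*}
where $C' := C^2\,\delta(\pi)\,\delta(\pi^{-1})$ is finite and, crucially, independent of $n$ because $\pi$ and $\pi^{-1}$ are fixed. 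Taking $n$-th roots and letting $n \to \infty$ yields $\lambda_1(g) \le \lambda_1(f)$; exchanging the roles of $f$ and $g$ (i.e. replacing $\pi$ by $\pi^{-1}$) gives the reverse inequality, so $\lambda_1(f) = \lambda_1(g)$. I would close by noting that this is exactly the tool that licenses computing $\lambda_1(H)$ on whichever birational model of $\CP^1 \times \CP^1$ is most convenient — in particular on the blow-up $S$ at $(1,1)$, $(\infty,0)$, $(0,\infty)$, where one expects $H$ to become algebraically stable.
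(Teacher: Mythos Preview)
The paper does not give its own proof of this proposition; it simply cites it as \cite[Cor.~7]{DS_BOUND} and remarks that it holds in much greater generality. Your sketch is essentially an outline of the Dinh--Sibony argument itself: reformulate $\lambda_1$ via the degree $\delta(f^n)$, invoke their submultiplicativity estimate $\delta(h_2\circ h_1)\le C\,\delta(h_1)\,\delta(h_2)$, and apply it to $g^n=\pi\circ f^n\circ\pi^{-1}$. That is correct and is exactly the approach the cited reference takes, so there is nothing to compare here --- you have reconstructed the intended proof rather than found an alternative.
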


\noindent (Proposition \ref{PROP_CONGUGACY} actually holds
considerably greater generality, including in arbitrary
dimensions, but we will only need the simplest form, as stated here.)

Based on these two propositions, there is a clear strategy for computing the
first dynamical degree of a rational map $f: X \dashrightarrow X$.  One
should try to do a sequence of blow-ups $\pi: \tilde{X} \rightarrow X$
in order to make the lifted map $\tilde{f}: \tilde{X} \dashrightarrow
\tilde{X}$ satisfy the second equivalent condition from Proposition
\ref{PROP:DF}, and hence be algebraically stable.   Then
\begin{align}\label{EQN:STRATEGY}
\lambda_1(f) = \lambda_1(\tilde{f}) = {\rm spectral\, radius}\left(\tilde{f}^*: {\rm H}^{1,1}(\tilde{X}) \rightarrow {\rm H}^{1,1}(\tilde{X})\right),
\end{align}
where the first equality follows from applying Proposition \ref{PROP_CONGUGACY}
to the conjugacy $\pi$ between $f$ and $\tilde{f}$, and the second equality
follows from Proposition \ref{PROP:DF}.   Finally, if the cohomology
${\rm H}^{1,1}(\tilde{X})$ is generated by the fundamental classes of algebraic
curves, one can use Lemma~\ref{LEM:PULLBACK_ALG_CURVE} to compute the
spectral radius on the right hand side of (\ref{EQN:STRATEGY}).

\subsection{Ruling out invariant fibrations by means of dynamical degrees}

\begin{lemma}\label{LEM_DYN_DEG_FIBRATION} Suppose $f: X \dashrightarrow X$ is a dominant rational map of a projective surface that
has an invariant fibration.  Then, $\lambda_1(f)$ divides $\lambda_2(f)$.
\end{lemma}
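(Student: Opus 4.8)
The plan is to reduce to the setting of Dinh--Nguyen and Dinh--Nguyen--Truong \cite{NT,DNT_fibration} on dynamical degrees relative to a fibration, and then read off the divisibility. First I would make precise what ``invariant fibration'' means: there is a curve $B$, a dominant rational map $p\colon X \dashrightarrow B$ with connected generic fiber, and a rational map $g\colon B \dashrightarrow B$ such that $p \circ f = g \circ p$. Passing to a resolution of indeterminacy (using the sequence of blow-ups from \eqref{RESOLUTION}, which by Proposition \ref{PROP_CONGUGACY} changes neither $\lambda_1$ nor $\lambda_2$), I may assume $p$ is a genuine morphism whose generic fiber $F$ is a smooth curve, and that $f$ maps the generic fiber $p^{-1}(b)$ onto the fiber $p^{-1}(g(b))$.

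The key step is to factor the topological degree through the fibration. Over a generic point $b \in B$, the preimage $f^{-1}(p^{-1}(b))$ is the union of the fibers over $g^{-1}(b)$, and a generic-point count gives $\lambda_2(f) = \deg_{\mathrm{top}}(f) = d_B \cdot d_F$, where $d_B := \deg_{\mathrm{top}}(g)$ is the number of preimages of a generic point of $B$ under $g$, and $d_F$ is the degree of the restriction $f|_F \colon F \to p^{-1}(g(b))$ between generic fibers. The point of the relative theory is that the \emph{first} dynamical degree is governed entirely by the base dynamics: one has $\lambda_1(f) = \max\{\lambda_1(g)\cdot d_F,\ \lambda_1(f|_F)\}$, and since $B$ is a curve, $\lambda_1(g) = d_B$, while the fiber is a curve so $\lambda_1(f|_F)$ is just the topological degree $d_F$ of $f$ restricted to a generic fiber. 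Hence $\lambda_1(f) = \max\{d_B, 1\}\cdot d_F = d_B \cdot d_F$ when $d_B \geq 1$, which always holds for a dominant $g$. Comparing, $\lambda_1(f) = d_B\, d_F$ divides $d_B\, d_F = \lambda_2(f)$; in fact here they are equal, but in general (e.g.\ when the fiber map is not dominant, or after correctly accounting for the mixed degree) one gets $\lambda_1(f) \mid \lambda_2(f)$ with possibly a proper divisor.

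I expect the main obstacle to be stating the relative dynamical degree formula at exactly the level of generality needed and citing it correctly, rather than any hard computation. In particular one must be careful that the fibration be $f$-invariant in the genuine sense (not merely that $f$ permutes fibers after base change), that the base map $g$ be dominant so that $d_B \geq 1$, and that the resolution of indeterminacy does not destroy the product structure of the generic fiber count --- these are exactly the hypotheses under which \cite[Theorem]{DNT_fibration} applies. Once the formula $\lambda_1(f) = d_B \cdot (\text{fiber degree})$ and $\lambda_2(f) = d_B \cdot (\text{fiber degree})$ (or, more carefully, $\lambda_2(f)/\lambda_1(f) \in \mathbb{Z}_{\geq 1}$) is in hand, the divisibility is immediate. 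I would keep the proof short: set up the fibration, invoke the cited relative-degree results to express both $\lambda_1(f)$ and $\lambda_2(f)$ in terms of $\deg_{\mathrm{top}}(g)$ and the generic-fiber degree of $f$, and conclude $\lambda_1(f) \mid \lambda_2(f)$.
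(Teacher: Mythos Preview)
Your overall strategy---invoke the relative dynamical degree formulae of Dinh--Nguyen and Dinh--Nguyen--Truong and read off the divisibility---is exactly what the paper does. But you have mis-stated the formula for $\lambda_1(f)$, and this leads you to the generally false conclusion that $\lambda_1(f)=\lambda_2(f)$ whenever there is an invariant fibration.

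The correct specialization of the DNT formula to a surface fibered over a curve is
\begin{align*}
\lambda_1(f) &= \max\big(\lambda_1(g)\,\lambda_0(f\,|\,\pi),\ \lambda_0(g)\,\lambda_1(f\,|\,\pi)\big), \\
\lambda_2(f) &= \lambda_1(g)\,\lambda_1(f\,|\,\pi),
\end{align*}
and since $\lambda_0(g)=\lambda_0(f\,|\,\pi)=1$ this simplifies to $\lambda_1(f)=\max\big(\lambda_1(g),\,\lambda_1(f\,|\,\pi)\big)=\max(d_B,d_F)$, \emph{not} $d_B\cdot d_F$. In your displayed formula you inserted an extra factor of $d_F$ in the first term of the maximum; the mixed product $\lambda_1(g)\,\lambda_1(f\,|\,\pi)$ only appears in $\lambda_2$, not in $\lambda_1$. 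With the correct formula the divisibility is immediate: $\lambda_1(f)$ is one of the two integer factors of $\lambda_2(f)=d_B\, d_F$, hence divides it. That is precisely the paper's argument.

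As a sanity check against your version: take $f(x,y)=(x^2,y^3)$ on $\CP^1\times\CP^1$ with the fibration $\pi(x,y)=x$. Then $d_B=2$, $d_F=3$, $\lambda_2(f)=6$, and $\lambda_1(f)=3$ (the spectral radius of $f^*$, which in the basis $[L_x],[L_y]$ is the diagonal matrix with entries $2,3$). Your formula would give $\lambda_1(f)=6$, forcing $\lambda_1=\lambda_2$ for every fibered map, which this example contradicts. Your hedge at the end (``in general \dots\ possibly a proper divisor'') is pointing at the right answer, but the computation preceding it does not support it.
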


\begin{proof} 
Suppose that $Y$ is a one-dimensional projective curve and that
$f: X \dashrightarrow X$ is semi-conjugate to a rational
map $g: Y \dashrightarrow Y$, by a rational mapping $\pi: X \dashrightarrow Y$.  The formula from
\cite{NT,DNT_fibration} gives that: 
\begin{align*}
\lambda_1(f) &= {\rm max}\Big(\lambda_1(g) \lambda_0(f|\pi), \lambda_0(g) \lambda_1(f|\pi)\Big), \quad \mbox{and} \\
\lambda_2(f) &= \lambda_1(g) \lambda_1(f|\pi).
\end{align*}
Here, the fiber-wise dynamical degrees are defined as
\begin{align}\label{DEF_FIBERWISE}
\lambda_i(f|\pi) := \lim_{n \rightarrow \infty} \|(f^n)^*(\omega_X^i) \wedge \pi^*(\omega_Y)\|^{1/n}
\end{align}
for $i=0,1$, where $\omega_X$ and $\omega_Y$ are the Fubini-Study forms on $X$
and $Y$ respectively and $\| \cdot \|$ denotes the mass of a current.  (The exponent on $\omega_Y$ is $1$ since ${\rm dim}(Y) = 1$.)  It is always true that the $0$-th dynamical degree of a
mapping is $1$, so that $\lambda_0(g)=1$.  (This is why we didn't mention
$0$-th dynamical degrees in the introduction).  Meanwhile, it follows
immediately from (\ref{DEF_FIBERWISE}) that $\lambda_0(f|\pi) = 1$.  Therefore,
$\lambda_1(f) = \max\left(\lambda_1(g),\lambda_1(f|\pi)\right)$.  In either case, the
result follows from $\lambda_2(f) = \lambda_1(g) \lambda_1(f|\pi)$.
\end{proof}

\noindent
Corollary \ref{COR_NO_FIBRATION} follows.

\section{Algebraic stability on a suitable blow-up of $\CP^1\times \CP^1$.}\label{SEC_AS}
We use multi-homogeneous coordinates $([X:U],[Y:V])$ on $\CP^1 \times \CP^1$, where the affine coordinates from (\ref{EQN:DEF_MAP}) correspond to $(x,y) = (X/U,Y/V)$.  There are three other ``standard'' choices of local coordinates on $\CP^1\times \CP^1$ given by
\begin{align*}
(u,y) := (U/X,Y/V), \qquad (x,v) := (X/U,V/Y), \qquad \mbox{and} \qquad (u,v) :=(U/X,V/Y).
\end{align*}
In the $(x,y)$ local coordinates, the critical set of $H: \mathbb{C}^2 \dashrightarrow \mathbb{C}^2$ consists of the following five irreducible curves:
{\footnotesize
\begin{align*}
C_1 &= \{xy-1 = 0\}, \\
C_2 &= \{2xy+x+y-4 =0\}, \\
C_3 &= \{x^2y^2-6xy-y+6 = 0\}, \\
C_4 &= \{x^2y^2-6xy-x+6 = 0\}, \mbox{and} \\
C_5 &= \{x^6y^6-10x^5y^5-x^6y^3+2x^5y^4+2x^4y^5-x^3y^6-4x^5y^3+39x^4y^4-4x^3y^5+3x^5y^2-12x^4y^3\\
&-12x^3y^4+3x^2y^5+10x^4y^2-47x^3y^3+10x^2y^4-3x^4y^2+22x^3y^2+22x^2y^3-3xy^4-12x^3y-2x^2y^2\\
&-12xy^3-6x^2y-6xy^2+9x^2+21xy+9y^2-9x-9y = 0\}.
\end{align*}
}

\begin{lemma} The only curves collapsed by $H:\mathbb{CP}^1\times\mathbb{CP}^1\dashrightarrow\mathbb{CP}^1\times\mathbb{CP}^1$
are $C_1,\ldots,C_4$.  
\begin{itemize}
\item $C_1$ and $C_2$ are collapsed by $H$ to $p_1 := \{(x,y) = (1,1)\}$, 
\item $C_3$  is collapsed by $H$ to $p_2:=\{(x,y)=(\infty,0)\} = \{(u,y) = (0,0)\}$, and
\item $C_4$ is collapsed by $H$   to $p_3 := \{(x,y)=(0,\infty)\} =  \{(x,v) = (0,0)\}$.
\end{itemize}
The curve $C_5$ is not collapsed by $H$.
\end{lemma}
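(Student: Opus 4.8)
The plan is to verify each collapsing statement by direct computation with the explicit formula~\eqref{EQN:DEF_MAP}, organized so that the algebra is as light as possible. First I would observe that the critical curves $C_1,\ldots,C_5$ are exactly the irreducible factors of the Jacobian determinant of $H$, so that the curves potentially collapsed by $H$ are among these five together with the curves at infinity of $\CP^1\times\CP^1$ (the lines $\{U=0\}$ and $\{V=0\}$); a curve collapsed by $H$ must lie in the critical set of $H$, because along a collapsed curve the differential of $H$ drops rank. So it suffices to check each of $C_1,\ldots,C_5$ and the two lines at infinity.

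For $C_1=\{xy=1\}$ I would parameterize the curve, e.g. by $t\mapsto(t,1/t)$, substitute into the formulas for $x'$ and $y'$, and simplify; the claim is that both $x'$ and $y'$ reduce to the constant $1$. The factor $xy-1$ should appear in the numerators after clearing denominators, so this is a short check. For $C_2=\{2xy+x+y-4=0\}$ I would similarly solve for $y$ in terms of $x$ (rationally) and substitute, expecting again $x'\equiv y'\equiv 1$. For $C_3=\{x^2y^2-6xy-y+6=0\}$: note this factor appears directly in the numerator of $x'$ but in the \emph{denominator} of $y'$; so on $C_3$ we get $x'=0$ while $y'\to\infty$, i.e. $H(C_3)=\{(x,y)=(0,\infty)\}$ --- wait, I must be careful about which point, since the numerator of $y'$ is $(x^2y^2-6xy-y+6)$ and the denominator contains $(x^2y^2-6xy-x+6)$. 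Reading the formulas: on $C_3$, $x'$ has numerator vanishing, and $y'$ has the factor $x^2y^2-6xy-y+6$ in its numerator, so I need to recheck: it is the denominator of $x'$ that contains $x^2y^2-6xy-y+6$, and the numerator of $y'$. So on $C_3$: $x'\to\infty$ and $y'=0$, giving $p_2=(\infty,0)$, consistent with the statement. Analogously $C_4$ maps to $p_3=(0,\infty)$ by the symmetry $R(x,y)=(y,x)$, which exchanges $C_3\leftrightarrow C_4$ and $p_2\leftrightarrow p_3$; so one computation plus the symmetry handles both. For the lines at infinity $\{U=0\}$ and $\{V=0\}$, I would pass to the appropriate local coordinates (e.g. $(u,y)$) and check that $H$ restricted there is non-constant --- its image should be a curve, not a point --- so these are not collapsed.

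The main obstacle, and the only part requiring real effort, is $C_5$: I must show it is \emph{not} collapsed, i.e. that $H(C_5)$ is a curve rather than a point. The degree-$12$ polynomial defining $C_5$ makes naive parameterization hopeless, so instead I would argue as follows. If $C_5$ were collapsed to a point $q$, then $C_5$ would be contained in $H^{-1}(q)$, but more usefully: pick two distinct points $P,P'$ on $C_5$ (found numerically or by restricting to a convenient line and solving), compute $H(P)$ and $H(P')$, and exhibit that they differ; since $H$ restricted to the irreducible curve $C_5$ is either constant or has finite fibers, producing two points with distinct images proves $H(C_5)$ is a curve. Care is needed that $P,P'$ avoid the indeterminacy locus $I(H)$ and the other critical curves, but a generic choice on $C_5$ suffices. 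Alternatively, and perhaps cleaner for a written proof, I would intersect $C_5$ with the line $\{x=c\}$ for a well-chosen constant $c$ and track how $x'$ varies along the resulting finite set of points, showing it is non-constant. Either way the computation is finite and can be carried out with a computer algebra system; I would present the outcome and relegate the polynomial bookkeeping to a remark or an appendix.

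Finally, I would assemble these checks: $C_1,C_2$ collapse to $p_1=(1,1)$; $C_3$ to $p_2=(\infty,0)$; $C_4$ to $p_3=(0,\infty)$; and $C_5$ together with the lines at infinity are not collapsed, and no other curve can be collapsed since any collapsed curve lies in the critical set. This proves the lemma.
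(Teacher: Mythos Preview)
Your proposal is correct and follows essentially the same approach as the paper: reduce to the critical curves plus the lines at infinity, handle $C_1,C_2$ by solving linearly and substituting, handle $C_3$ (and $C_4$ by the $R$-symmetry) by reading off which factor appears in which numerator/denominator, and dispose of $C_5$ by exhibiting two points with distinct images. The one place the paper is sharper is $C_5$: setting $x=0$ in the defining polynomial leaves $9y^2-9y$, so $(0,0)$ and $(0,1)$ lie on $C_5$ and a direct evaluation of $H$ at these two points suffices---no CAS or numerical search needed.
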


\begin{proof}
Any curve that is collapsed by $H$ is in the critical set, and it is also easy to verify in $(u,v)$ coordinates that the lines at infinity, $\{u=0\}$ and $\{v=0\}$, are not collapsed.  Thus, we need only consider $C_1,\ldots,C_5$.

The variable $x$ occurs at most linearly in the defining equations for
$C_1$ and $C_2$.  One can therefore solve for $x$ as a function of $y$.  Substituting into the defining equation for $H$, (\ref{EQN:DEF_MAP}) yields that each curve $C_1$ and $C_2$ collapses to $p_1$.

To see that $C_3$ is collapsed to $p_2$, we use $(x,y)$ coordinates in the domain and 
$(u,y) = (1/x,y)$ in codomain:
\begin{align}
u'=1/x' &= {\frac { \left( x{y}^{2}+4\,xy+x-y-5 \right)  \left( {x}^{2}{y}^
{2}-6\,xy-y+6 \right) }{ \left( x{y}^{2}+2\,xy-3 \right)  \left( {x}^{2}{y}^{2}-6\,xy-
x+6 \right) }}, \quad \mbox{and} \\
y' &= {\frac { \left( {x}^{2}y+2\,xy-3 \right)  \left( {x}^{2}{y}^{2}-6\,xy-
y+6 \right) }{ \left( {x}^{2}y+4\,xy-x+y-5 \right)  \left( {x}^{2}{y}^
{2}-6\,xy-x+6 \right) }}. \nonumber
\end{align}
Since the defining equation of $C_3$ appears in the numerator of both $u'$ and
$y'$, it follows that $C_3$ collapses to $(u,y) = (0,0)$. 
The fact that $C_4$ is collapsed by $H$ to $p_3$ follows by symmetry under $R$, or by using a similar calculation.

Meanwhile, $(x,y) = (0,0)$ and $(x,y) = (0,1)$ are on $C_5$ and they map by $H$ to different points. 
\end{proof}

Let $\mathcal{X}$ be the blow-up of $\CP^1\times\CP^1$ at $p_1$, $p_2$, and $p_3$, let
$E_1, E_2,$ and $E_3$ be the resulting exceptional divisors, respectively, and
let $\pi\colon \mathcal{X} \rightarrow \CP^1\times\CP^1$ be the canonical projection.  We
let $\H: \mathcal{X} \dashrightarrow \mathcal{X}$ be the lift of
$H$ to this space such that the following diagram commutes:

\begin{align}
\label{EQN:CONJ}
\xymatrix{
\X \ar @{-->}[r]^\H  \ar[d]^\pi  & \X \ar[d]^\pi  \\
\CP^1\times\CP^1  \ar @{-->}[r]^H & \CP^1\times\CP^1.
}
\end{align}

For any algebraic curve $C \subset \mathbb{CP}^1 \times \mathbb{CP}^1$ 
let 
\begin{align*}
\widetilde{C} = \overline{\pi^{-1}(C \setminus \{p_1,p_2,p_3\})}
\end{align*}
denote the {\em proper transform} of $C$ under $\pi$.

The local coordinates $(x,y), (u,y), (x,v)$ and $(u,v)$ on
$\mathbb{P}^1 \times \mathbb{P}^1$ continue to serve as local
coordinates on $\mathcal{X}$ away from the exceptional divisors $E_1, E_2$, and
$E_3$.  We now set up some standard choices of local coordinates on
$\mathcal{X}$ near these exceptional divisors.  Consider the local coordinates
$(a,b) = (x-1,y-1)$ on $\mathbb{P}^1 \times \mathbb{P}^1$ centered at $p_1$.
One can describe a neighborhood of $E_1$ in $\X$ using two systems of local
coordinates.  The local coordinates $(a,\buone)=(a,b/a)$ describe all
directions of approach to $p_1$, except along the line $\{x=1\}$, which
corresponds to $m_1 = \infty$.  (We actually won't need the other system of
coordinates in this paper.)
Similarly, the local coordinates $(u,\butwo) = (u,y/u)$ describe a neighborhood
of all but one point of the exceptional divisor $E_2$, and the local coordinates
$(v,\buthree) = (v,x/v)$ describe a neighborhood of all but one point of
the exceptional divisor $E_3$.  See Figure \ref{FIG_BLOWNUP}.

\begin{figure}
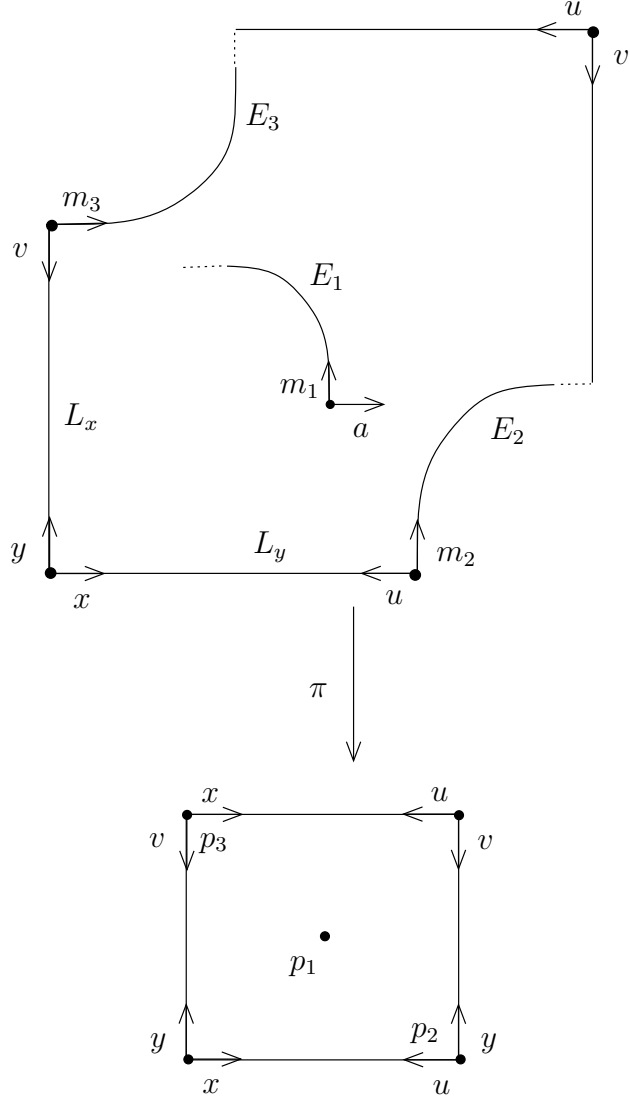
\caption{\label{FIG_BLOWNUP} On the bottom is $\CP^1\times\CP^1$, with the four sets of local coordinates labeled.  On the top is $\mathcal{X}$, the blow-up of $\CP^1\times\CP^1$ at $p_1$, $p_2$, and $p_3$, with the respective exceptional divisors $E_1, E_2,$ and $E_3$.}
\end{figure}

\begin{lemma}\label{LEM:AS}
$\H: \mathcal{X}  \dashrightarrow \mathcal{X}$ does not collapse any curves and is therefore algebraically stable.  
In particular,
\begin{itemize}
\item $\H$ maps $\widetilde{C_1}$ and $\widetilde{C_2}$ onto all of $E_1$
\item  $\H$ maps $\widetilde{C_3}$ onto all of $E_2$, and
\item $\H$ maps $\widetilde{C_4}$ onto all of $E_3$.
\end{itemize}
\end{lemma}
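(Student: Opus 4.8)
By the observation following Proposition~\ref{PROP:DF}, it suffices to show that $\H$ collapses no curve of $\X$; algebraic stability then follows. Every irreducible curve of $\X$ is either one of the exceptional divisors $E_1,E_2,E_3$ or the proper transform $\widetilde C$ of an irreducible curve $C\subset\CP^1\times\CP^1$; and if $\H$ collapsed such a $\widetilde C$, then, since $\pi\circ\H=H\circ\pi$ and $\pi$ is birational, $H$ would collapse $C=\pi(\widetilde C)$, forcing $C\in\{C_1,C_2,C_3,C_4\}$ by the preceding Lemma. So it is enough to check that none of the seven curves $\widetilde{C_1},\widetilde{C_2},\widetilde{C_3},\widetilde{C_4},E_1,E_2,E_3$ is collapsed. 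Since $R(x,y)=(y,x)$ commutes with $H$, fixes $p_1$, $C_1$, $C_2$ and interchanges $p_2\leftrightarrow p_3$, $C_3\leftrightarrow C_4$, it lifts to an automorphism of $\X$ conjugating $\H$ to itself and interchanging $E_2\leftrightarrow E_3$; thus we may restrict attention to $\widetilde{C_1},\widetilde{C_2},E_1,\widetilde{C_3},E_2$.

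For $\widetilde{C_3}$: as $H$ collapses $C_3$ to $p_2$ and $\pi^{-1}(p_2)=E_2$, we have $\H(\widetilde{C_3})\subset E_2$, and I would compute the induced map $\widetilde{C_3}\cong\CP^1\to E_2\cong\CP^1$ in the coordinates $(u,m_2)=(1/x',x'y')$ near $E_2$, in which $E_2=\{u=0\}$. From the factored formula for $1/x'$ in the proof of the preceding Lemma, the equation $x^2y^2-6xy-y+6$ of $C_3$ divides the numerator of $u\circ H=1/x'$ exactly once and does not divide the denominator; meanwhile $m_2\circ H=x'y'$ simplifies to $\frac{(xy^2+2xy-3)(x^2y+2xy-3)}{(xy^2+4xy+x-y-5)(x^2y+4xy-x+y-5)}$, whose numerator and denominator are coprime to the equation of $C_3$. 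Hence $m_2$ restricts to an honest rational function on $C_3$, and evaluating it at two points of $C_3$ — say $(5,1)$ and $(0,6)$, where it equals $\tfrac{4}{9}$ and $-\tfrac{9}{11}$ — shows it is non-constant; so $\H$ maps $\widetilde{C_3}$ onto all of $E_2$, and (applying $R$) $\widetilde{C_4}$ onto all of $E_3$. The curves $\widetilde{C_1}$ and $\widetilde{C_2}$ are handled the same way in the coordinates $(a,m_1)=(x'-1,(y'-1)/(x'-1))$ near $E_1$: since $H$ collapses $C_1$ (resp.\ $C_2$) to $(1,1)$, the equation of $C_1$ (resp.\ $C_2$) divides the numerators of both $x'-1$ and $y'-1$, and one checks it divides them to the \emph{same} order, so that $m_1$ restricts to a rational function on $C_1\cong\CP^1$ (resp.\ on $C_2$) whose non-constancy is again verified at two points; thus $\H$ maps each of $\widetilde{C_1},\widetilde{C_2}$ onto all of $E_1$.

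It remains to show $\H$ does not collapse $E_1$ or $E_2$ (and hence, by $R$, not $E_3$). For this I would substitute $x=1+a$, $y=1+a\,m_1$ (resp.\ $x=1/u$, $y=u\,m_2$) into~(\ref{EQN:DEF_MAP}), pass to the appropriate chart on the blown-up target, restrict to $\{a=0\}$ (resp.\ $\{u=0\}$), and check that the resulting map $\CP^1\dashrightarrow\X$ is non-constant. With all seven curves handled, $\H$ collapses no curve, so Proposition~\ref{PROP:DF} gives algebraic stability along with the three itemized claims.

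The computations in the last two steps are routine but somewhat lengthy. I expect the verification that the $E_i$ are not collapsed to be the real obstacle: unlike the statements about the $\widetilde{C_i}$, which follow from the preceding Lemma once one sees that the relevant vanishing orders coincide, it is not a formal consequence of general theory that blowing up each $p_i$ a single time already renders the corresponding exceptional divisor non-collapsed — a priori the tangential limits of $H$ at one of the $p_i$ could all agree, which would force further blow-ups. Ruling this out is the crux of the argument.
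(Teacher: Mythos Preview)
Your approach is essentially the same as the paper's: reduce to the seven candidate curves, use the symmetry $R$ to halve the work, and verify in blow-up coordinates that each maps non-constantly. The paper carries out the $E_1$ and $E_2$ computations you only outline (writing $\H$ in $(a,m_1)$ and $(u,m_2)$ domain coordinates and setting $a=0$, $u=0$ to obtain explicit non-constant rational functions of $m_1$, $m_2$), and for $\widetilde{C_1},\widetilde{C_2}$ it works directly with the simplified expression for $m_1'$ in~(\ref{E1COORDS}) rather than arguing via matching vanishing orders of $x'-1$ and $y'-1$; but these are differences of execution, not of strategy.
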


\begin{proof}
If $\mathcal{H}$ collapses a curve $A$ that is not one of the exceptional divisors $E_i$ of $\pi$,
then $\pi(A)$ is a curve that is collapsed by $H$.  Therefore,
since $C_1,\ldots,C_4$ are the only curves
collapsed by $H$, it suffices to check that $\mathcal{H}$
does not collapse $E_1, E_2, E_3, \widetilde{C_1}, \ldots, \widetilde{C_3}$, or $\widetilde{C_4}$.

We will first check that $\H$ does not collapse the exceptional divisors
$E_1,E_2$, and $E_3$.  If we express $\H$ in using $(a,\buone )$ coordinates in the domain
and $(x,y)$ in the codomain, we find
{\footnotesize
\begin{align*}
x' &= {\frac { \left( {a}^{2}{\buone }^{2}+a{\buone }^{2}+4\,a\buone +4\,\buone +3 \right)  \left( {
a}^{3}{\buone }^{2}+2\,{a}^{2}{\buone }^{2}+2\,{a}^{2}\buone +a{\buone }^{2}-2\,a\buone +a-4\,\buone -5
 \right) }{ \left( {a}^{2}{\buone }^{2}+a{\buone }^{2}+6\,a\buone +5\,\buone +6 \right) 
 \left( {a}^{3}{\buone }^{2}+2\,{a}^{2}{\buone }^{2}+2\,{a}^{2}\buone +a{\buone }^{2}-2\,a\buone +a-
5\,\buone -4 \right) }}
 \\
y' &= {\frac { \left( {a}^{2}\buone +4\,a\buone +a+3\,\buone +4 \right)  \left( {a}^{3}{\buone }^{2}
+2\,{a}^{2}{\buone }^{2}+2\,{a}^{2}\buone +a{\buone }^{2}-2\,a\buone +a-5\,\buone -4 \right) }{
 \left( {a}^{2}\buone +6\,a\buone +a+6\,\buone +5 \right)  \left( {a}^{3}{\buone }^{2}+2\,{a}^
{2}{\buone }^{2}+2\,{a}^{2}\buone +a{\buone }^{2}-2\,a\buone +a-4\,\buone -5 \right) }}
\end{align*}
}
The image of $E_1$ is obtained by setting $a = 0$:
\begin{align*}
(x',y') = \left({\frac { \left( 4\,\buone +3 \right)  \left( 4\,\buone +5 \right) }{ \left( 5\,\buone +
6 \right)  \left( 5\,\buone +4 \right) }},{\frac { \left( 3\,\buone +4 \right) 
 \left( 5\,\buone +4 \right) }{ \left( 6\,\buone +5 \right)  \left( 4\,\buone +5
 \right) }}\right).
\end{align*}
Since these functions are not constant functions of $\buone $, we conclude that $\H$ does not collapse~$E_1$. 

If we use the local coordinates $(u,\butwo )$ in a neighborhood of generic points of $E_2$ and coordinates $(x,y)$ in the codomain, we find
\begin{align}\label{EQN:MAPPING_NEAR_E2}
x' &= {\frac { \left( {\butwo }^{2}u+2\,\butwo -3 \right)  \left( {\butwo }^{2}u-6\,\butwo u+6\,u-1
 \right) }{ \left( {\butwo }^{2}{u}^{2}-\butwo {u}^{2}+4\,\butwo u-5\,u+1 \right) 
 \left( {\butwo }^{2}-\butwo u-6\,\butwo +6 \right) }}
 \\
y' &= {\frac {u \left( 2\,\butwo u+\butwo -3\,u \right)  \left( {\butwo }^{2}-\butwo u-6\,\butwo +6
 \right) }{ \left( \butwo {u}^{2}+4\,\butwo u+\butwo -5\,u-1 \right)  \left( {\butwo }^{2}u-6
\,\butwo u+6\,u-1 \right) }}. \nonumber
\end{align}
The image of $E_2$ is obtained by setting $u=0$:
\begin{align}\label{EQN:E_2_MAPS_TO_LY}
(x',y') = \left({\frac {3 - 2\,\butwo}{{\butwo }^{2}-6\,\butwo +6}},0\right).
\end{align}
Again, since these functions are not constant functions of $\butwo$, we conclude that $\H$ does not collapse $E_2$. 
The fact that $\H$ does not collapse $E_3$ follows by symmetry under $R$.

It remains to show that the curves $\widetilde C_1$, $\widetilde C_2$, $\widetilde C_3$,
and $\widetilde C_4$ whose projections by $\pi$ were collapsed by $H$ are not
collapsed by $\H$.  We check $\widetilde C_1$ and $\widetilde C_2$ by expressing $\mathcal{H}$ using the local
coordinates $(x,y)$ in the domain and $(a,\buone )$ in the codomain:
\begin{align}\label{E1COORDS}
a' &= -{\frac { \left( 2\,xy+x+y-4 \right)  \left( xy-y-3 \right)  \left( xy-
1 \right) }{ \left( x{y}^{2}+4\,xy+x-y-5 \right)  \left( {x}^{2}{y}^{2
}-6\,xy-y+6 \right) }}
 \\
\buone ' &= {\frac { \left( {x}^{2}{y}^{2}-6\,xy-y+6 \right)  \left( x{y}^{2}+4\,x
y+x-y-5 \right)  \left( xy-x-3 \right) }{ \left( xy-y-3 \right) 
 \left( {x}^{2}y+4\,xy-x+y-5 \right)  \left( {x}^{2}{y}^{2}-6\,xy-x+6
 \right) }}. \nonumber
\end{align}
Recall that $\widetilde C_1=\{xy-1=0\}$, and substituting $y=1/x$ into the equation for $\buone '$, we have
\[\buone'=\frac{x(x+2)}{2x+1},\]
a non-constant function.  Thus, $\buone '$ varies while traversing $\widetilde C_1$, so $\H$ does not collapse $\widetilde C_1$.  

Similarly for $\widetilde C_2=\{2xy+x+y-4=0\}$, substituting $y=\frac{4-x}{2x+1}$ into $\buone '$, we have
\[\buone'={\frac {9({x}^{2}+x+1)}{ \left( {x}^{2}+x+7 \right)  \left( 2\,x+1
 \right) }}.\]
 This is also non-constant, so $\H$ does not collapse  $\widetilde C_2$.
The defining equations for $\widetilde C_1$ and $\widetilde C_2$
appear in the numerator of $a'$ so that they are both mapped by $\mathcal{H}$ to $E_1 = \{a =
0\}$.  Since they are not collapsed by $\mathcal{H}$ and $E_1$ is irreducible,
we conclude that $\widetilde C_1$ and $\widetilde C_2$ are mapped onto all of $E_1$.  

To check that $\widetilde C_3$ is mapped onto all of $E_2$, we use local coordinates $(x,y)$ in the domain and $(u,\butwo )$ in the codomain:
\begin{align*}
u' &= {\frac { \left( x{y}^{2}+4\,xy+x-y-5 \right)  \left( {x}^{2}{y}^{2}-6
\,xy-y+6 \right) }{ \left( x{y}^{2}+2\,xy-3 \right)  \left( {x}^{2}{y}
^{2}-6\,xy-x+6 \right) }}
\\
\butwo' &= {\frac { \left( x{y}^{2}+2\,xy-3 \right)  \left( {x}^{2}y+2\,xy-3
 \right) }{ \left( x{y}^{2}+4\,xy+x-y-5 \right)  \left( {x}^{2}y+4\,xy
-x+y-5 \right) }}.
\end{align*}
Since $(x,y) = (0,6)$ and $(x,y) = (1,6)$ are on $\widetilde C_3$ and are mapped by $\mathcal{H}$ to different points on $E_2$, 
we conclude that $\widetilde C_3$ is not collapsed by $\mathcal{H}$.  Meanwhile, the defining equation for $\widetilde{C_3}$ is a factor of the numerator of the equation for $u'$, so we conclude that $\mathcal{H}$ maps $\widetilde{C_3}$ onto all of $E_2 = \{u=0\}$.
By the symmetry of $\H$ under $R$, or very
similar calculations, one can also show that $\widetilde C_4$ is mapped onto all of
$E_3$.

We conclude that $\mathcal{H}$ does not collapse any curve.  Therefore,
Proposition \ref{PROP:DF} gives that $\H$ is algebraically stable.  
\end{proof}

\section{Pullback on cohomology and computation of $\lambda_1(H)$.}\label{SEC_PULLBACK}

At this point we have completed all but the last step of the strategy for
computing $\lambda_1(H)$ that was presented in Section \ref{SEC:STRATEGY}.  We
have proved in Lemma \ref{LEM:AS} that  $\H : \X \dashrightarrow \X$ is
algebraically stable so we have
\begin{align}\label{EQN:EQUAL_DEGREES}
\lambda_1(H) = \lambda_1(\H) = {\rm spectral\, radius}\left(\H^*: {\rm H}^{1,1}(\X) \rightarrow {\rm H}^{1,1}(\X)\right).
\end{align}
It remains to use Lemma \ref{LEM:PULLBACK_ALG_CURVE} to compute $\H^*: {\rm H}^{1,1}(\X)
\rightarrow {\rm H}^{1,1}(\X)$.   The first step is to
choose a good basis.  It is well-known that
\begin{align*}
{\rm H}^{1,1}(\CP^1\times\CP^1) \cong \mathbb{Z}^2
\end{align*} and is generated by $\left[\Lx\right] = \left[\{X=0\}\right]$ and $\left[\Ly\right] = \left[\{Y=0\}\right]$.  Therefore, by \cite[P. 474]{GH},
\begin{align}\label{EQN:BASIS}
\pi^{\ast}[\Lx], \quad \pi^{\ast}[\Ly], \quad \left[E_1\right], \quad \left[E_2\right], \quad \mbox{and} \quad \left[E_3\right]
\end{align}
is an ordered basis for ${\rm H}^{1,1}(\X)$, where
$\pi^{\ast}[\Lx]$ and $\pi^{\ast}[\Ly]$ are the total transforms of $\Lx$ and $\Ly$ respectively.

\begin{lemma} \label{LEM:PULLBACK}  Let $C_6=\{xy^2+2xy-3=0\}$ and $C_7=\{x^2y+2xy-3=0\}$.  We have
\begin{align}
\H^*\left(\pi^{\ast}[\Lx]\right) &= \left[\widetilde C_6\right]+\left[E_3\right]+\left[\widetilde C_4\right], \label{EQN:PULLBACK_LX} \\
\H^*\left(\pi^{\ast}[\Ly]\right) &= \left[\widetilde C_7\right]+\left[E_2\right]+\left[\widetilde C_3\right], \label{EQN:PULLBACK_LY}\\
\H^*\left[E_1\right] &= \left[\widetilde C_1\right]+\left[\widetilde C_2\right], \label{EQN:PULLBACK_E1} \\
\H^*\left[E_2\right] &= \left[\widetilde{C_3}\right], \quad  \mbox{and}  \label{EQN:PULLBACK_E2}  \\
\H^*\left[E_3\right] &= \left[\widetilde{C_4}\right].  \label{EQN:PULLBACK_E3} 
\end{align}
\end{lemma}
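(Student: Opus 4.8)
The plan is to apply Lemma~\ref{LEM:PULLBACK_ALG_CURVE} to each of the five basis classes in (\ref{EQN:BASIS}): for each one I would choose a convenient representative curve, identify the irreducible components of its total preimage under $\H$, and read the multiplicities $m_D$ straight off the explicit rational formulas for $\H$ assembled in Section~\ref{SEC_AS}. The reflection $R(x,y)=(y,x)$ halves the work. It lifts to an involutive automorphism of $\X$ interchanging $\Lx\leftrightarrow\Ly$, $C_6\leftrightarrow C_7$, $C_3\leftrightarrow C_4$, $E_2\leftrightarrow E_3$ and fixing $E_1$, and since $\H\circ R=R\circ\H$ the pullbacks commute, $R^*\circ\H^*=\H^*\circ R^*$; hence (\ref{EQN:PULLBACK_LY}) follows from (\ref{EQN:PULLBACK_LX}) and (\ref{EQN:PULLBACK_E2}) from (\ref{EQN:PULLBACK_E3}). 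So it suffices to prove (\ref{EQN:PULLBACK_E1}), (\ref{EQN:PULLBACK_E3}), and (\ref{EQN:PULLBACK_LX}).

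I would treat the exceptional divisors first. Since $\H$ collapses no curve (Lemma~\ref{LEM:AS}), any curve $D\subset\X$ with $\H(D)=E_i$ is either one of $E_1,E_2,E_3$ or projects to a curve $\pi(D)$ collapsed by $H$ to $p_i$; by the classification of collapsed curves (proved in Section~\ref{SEC_AS}) these are precisely $C_1,C_2$ (to $p_1$), $C_3$ (to $p_2$), and $C_4$ (to $p_3$). The image computations in the proof of Lemma~\ref{LEM:AS} give $\H(E_2)=\widetilde{\Ly}$, $\H(E_3)=\widetilde{\Lx}$, and $\H(E_1)$ equal to the nonconstant parametrized curve displayed there, so no exceptional divisor maps onto another; moreover Lemma~\ref{LEM:AS} records that $\widetilde{C_1},\widetilde{C_2}$ cover $E_1$ and $\widetilde{C_4}$ covers $E_3$. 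Therefore $\H^{-1}(E_1)=\widetilde{C_1}\cup\widetilde{C_2}$ and $\H^{-1}(E_3)=\widetilde{C_4}$, and all multiplicities equal $1$: in coordinates (\ref{E1COORDS}) one has $E_1=\{a=0\}$ and the polynomials $xy-1$ and $2xy+x+y-4$ defining $\widetilde{C_1},\widetilde{C_2}$ are simple factors of the numerator of $a'$, the other factors not vanishing at a generic point of either curve; the coordinate computation in the proof of Lemma~\ref{LEM:AS} that verifies $\widetilde{C_3}$ covers $E_2$ likewise exhibits the defining polynomial of $C_3$ as a simple factor of the relevant numerator, and $R$-symmetry transfers this to $\widetilde{C_4}$ and $E_3$. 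This gives (\ref{EQN:PULLBACK_E1}) and (\ref{EQN:PULLBACK_E3}).

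For $\H^*(\pi^*[\Lx])$, note that $\pi^*[\Lx]$ is the total transform of $\Lx=\{x=0\}$, and since $\Lx$ is smooth and meets $\{p_1,p_2,p_3\}$ only at $p_3$, this class is $[\widetilde{\Lx}]+[E_3]$, so $\H^*(\pi^*[\Lx])=\H^*[\widetilde{\Lx}]+\H^*[E_3]$. To pull back $[\widetilde{\Lx}]$ by Lemma~\ref{LEM:PULLBACK_ALG_CURVE}, observe from (\ref{EQN:DEF_MAP}) that $\overline{\{x'=0\}}=C_6\cup C_4$; but $C_4$ is collapsed by $H$ to $p_3\in\Lx$, so $\widetilde{C_4}$ maps onto $E_3$, not onto $\widetilde{\Lx}$, whereas $C_6$ is not collapsed and $\widetilde{C_6}$ maps onto $\widetilde{\Lx}$. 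By $R$-symmetry applied to $\H(E_2)=\widetilde{\Ly}$ (see (\ref{EQN:E_2_MAPS_TO_LY})), among the exceptional divisors only $E_3$ maps onto $\widetilde{\Lx}$. Hence $\H^{-1}(\widetilde{\Lx})=\widetilde{C_6}\cup E_3$. The multiplicity along $\widetilde{C_6}$ is $1$ since $xy^2+2xy-3$ is a simple factor of the numerator of $x'$, and the multiplicity along $E_3$ is $1$ since the $R$-image of (\ref{EQN:MAPPING_NEAR_E2}), in which $y'$ carries $u$ as a simple factor along $E_2=\{u=0\}$, shows $x'$ has a simple zero along $E_3$. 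Thus $\H^*[\widetilde{\Lx}]=[\widetilde{C_6}]+[E_3]$, and adding $\H^*[E_3]=[\widetilde{C_4}]$ from (\ref{EQN:PULLBACK_E3}) yields (\ref{EQN:PULLBACK_LX}); then (\ref{EQN:PULLBACK_LY}) follows by $R$-symmetry.

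Everything reduces to substitutions into the formulas of Section~\ref{SEC_AS}, so the hard part is purely the bookkeeping between proper and total transforms and the collapsed curves. The point that must not be botched is that a curve $C_i$ collapsed by $H$ to $p_j$ has $\widetilde{C_i}$ mapping \emph{onto $E_j$}, so $\widetilde{C_i}$ never appears in $\H^{-1}$ of the proper transform of a curve through $p_j$; concretely, although $\overline{\{x'=0\}}=C_6\cup C_4$, only $C_6$ contributes to $\H^{-1}(\widetilde{\Lx})$ because $C_4$ has disappeared into $E_3$. Keeping these incidences straight, and checking in each case that the ``extra'' factors of the rational formulas do not vanish at a generic point so that every $m_D=1$, is the crux; it also uses the elementary fact that $C_6$ and $C_7$ are irreducible, being linear in one variable with coprime coefficients.
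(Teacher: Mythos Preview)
Your proof is correct and follows essentially the same route as the paper's: decompose $\pi^*[\Lx]$ (respectively $\pi^*[\Ly]$) as $[\widetilde{\Lx}]+[E_3]$ (respectively $[\widetilde{\Ly}]+[E_2]$), compute the pullback of each summand via Lemma~\ref{LEM:PULLBACK_ALG_CURVE} by reading off simple factors from the explicit coordinate expressions in Section~\ref{SEC_AS}, and invoke the $R$-symmetry to halve the work. The only cosmetic difference is that the paper computes the $\Ly$ and $E_2$ cases directly and deduces $\Lx$ and $E_3$ by symmetry, while you do the reverse; your explicit remark that no exceptional divisor maps onto another (so none contributes to $\H^{-1}(E_i)$) is a useful clarification that the paper leaves implicit.
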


%

\begin{proof}

We'll start by computing $\H^*\left[E_i\right]$ for each $1\leq i\leq3$.  If we express $\H$ in local coordinates $(x,y)$ in the domain and $(a,\buone)$ in the codomain, as in (\ref{E1COORDS}), $E_1$ is locally
$\{a=0\}$.  We see that the defining equations for both $C_1$ and $C_2$
appear exactly once in the numerator of the expression for $a'$, so that
$\left[\widetilde{C_1}\right]$ and $\left[\widetilde{C_2}\right]$ are each assigned multiplicity one in $\H^*\left[E_1\right]$.  (Had the defining equation of $C_1$, for example, appeared with an exponent larger than
one in the expression for $a'$, the coefficient on homology would correspond
to that exponent.)
  The only additional term in the product is $xy-x-3$.  However, this also appears in the
denominator of the expression for $\buone'$, so the curve $\{xy-x-3=0\}$ is
mapped to a line at infinity.   This proves (\ref{EQN:PULLBACK_E1}).

The only curve mapping by $\H$ to $E_2$ is $\widetilde{C_3}$.  A calculation in local coordinates like above shows that $\left[\widetilde{C_3}\right]$ occurs with multiplicity one in $\H^*\left[E_2\right]$, proving (\ref{EQN:PULLBACK_E2}).   Again, we get (\ref{EQN:PULLBACK_E3}) by symmetry. 

Lastly, we compute $\H^*\left(\pi^{\ast}[\Ly]\right)$ and then obtain $\H^*\left(\pi^{\ast}[\Lx]\right)$ 
by symmetry under $R$.  Note that 
\[\H^*\left(\pi^{\ast}[\Ly]\right)\ =\ \H^*\left(\left[\widetilde\Ly\right]+[E_2]\right)\ =\ \H^*\left[\widetilde\Ly\right]+\left[\widetilde C_3\right],\]
where $\widetilde\Ly$ is the proper transform of $\Ly$.  It remains to compute $\H^*\left[\widetilde\Ly\right]$.  One sees from (\ref{EQN:DEF_MAP}) that the only curves 
mapped by $H$ to $\Ly$ are $C_3$ and $C_7$.  After performing the blowups, we saw in Lemma
\ref{LEM:AS} that $\H$ maps $\widetilde{C_3}$ onto $E_2$.  Therefore, $\widetilde{C_3}$
does not contribute to $\H^*\left[\widetilde{\Ly}\right]$.  We also saw in
(\ref{EQN:E_2_MAPS_TO_LY}) that $\H$ maps $E_2$ onto $\widetilde{\Ly}$.  Thus,
$\H^*\left[\widetilde{\Ly}\right]$ is a sum of $\left[\widetilde{C_7}\right]$ and $\left[E_2\right]$ with suitable
multiplicities.   The defining equation for $C_7$
occurs with multiplicity one in the equation for $y'$ in (\ref{EQN:DEF_MAP}).  Meanwhile, $E_2$ is $\{u=0\}$ in the $(u,m_2)$ coordinates.  Since $u$ 
occurs with multiplicity one in the equation for 
$y'$ in (\ref{EQN:MAPPING_NEAR_E2}),  
we see that $\left[E_2\right]$ is also assigned multiplicity one in
$\H^*\left[\widetilde\Ly\right]$.  This proves (\ref{EQN:PULLBACK_LY}), and 
Equation (\ref{EQN:PULLBACK_LX}) follows immediately under the symmetry $R$.
\end{proof}

\begin{remark}
Some caution is required when computing $\H^{\ast}(\pi^{\ast}[\Ly])$.  Indeed,
\begin{eqnarray*}
\pi^{\ast}\circ H^{\ast}&\neq&\left(\H\circ\pi\right)^{\ast}\ =\ \left(\pi\circ H\right)^{\ast}\ =\ \H^{\ast}\circ\pi^{\ast},
\end{eqnarray*}
where the first inequality comes from the fact that $\pi$ collapses curves into
the indeterminacy points of $H$ and the Diller-Favre criterion (Proposition
\ref{PROP:DF}), the middle equality comes from commutative diagram
(\ref{EQN:CONJ}), and the last equality comes from the fact that $\H$ does
not collapse any curves (Lemma \ref{LEM:AS}) and again the Diller-Favre
criterion.  

More specifically, one can check that $H^{\ast}[\Ly]=[C_3]+[C_7]$,
so
\begin{eqnarray*}
\pi^{\ast}\left(H^{\ast}[\Ly]\right)&=&\pi^{\ast}\left([C_3]+[C_7]\right)\ =\ \left[\widetilde C_3\right]+\left[\widetilde C_7\right]+2[E_1]+2[E_2]+2[E_3]\\
&\neq&\H^{\ast}\left(\pi^{\ast}[\Ly]\right).
\end{eqnarray*}
\end{remark}

We now need to re-express each of the cohomology classes on the right hand
sides of (\ref{EQN:PULLBACK_LX}-\ref{EQN:PULLBACK_E3}) in terms of the ordered
basis (\ref{EQN:BASIS}).  We first express the cohomology classes $\left[C_1\right],\ldots,\left[C_4\right]$,$\left[C_6\right]$ and $\left[C_7\right]$ in the basis $\left[\Lx\right]$ and $\left[\Ly\right]$
for ${\rm H}^{1,1}(\CP^1\times\CP^1)$:

\begin{lemma}\label{LEM:COH1} In ${\rm H}^{1,1}(\CP^1 \times \CP^1)$ we have
\begin{align}
\left[{C_1}\right] &=  \left[\Lx\right] + \left[\Ly\right], \label{EQN_PLAIN1} \\
\left[{C_2}\right] &= \left[{\Lx}\right] + \left[\Ly\right], \label{EQN_PLAIN2} \\
\left[{C_3}\right] &= 2\left[{\Lx}\right] + 2\left[\Ly\right], \label{EQN_PLAIN3} \\
\left[{C_4}\right] &= 2\left[{\Lx}\right] + 2\left[\Ly\right], \label{EQN_PLAIN4} \\
\left[C_6\right]  &=  \left[\Lx\right] + 2\left[\Ly\right], \quad \mbox{and} \label{EQN_PLAIN6} \\
\left[C_7\right] &=  2\left[\Lx\right] + \left[\Ly\right]. \label{EQN_PLAIN7}
\end{align}
\end{lemma}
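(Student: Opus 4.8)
The plan is to read off each class $[C_i]$ directly from the \emph{bidegree} of the curve $C_i \subset \CP^1 \times \CP^1$. Recall the standard intersection theory on $\CP^1\times\CP^1$: $[\Lx]^2 = [\Ly]^2 = 0$ and $[\Lx]\cdot[\Ly] = 1$, so for any curve $C$ one has $[C] = \big([C]\cdot[\Ly]\big)[\Lx] + \big([C]\cdot[\Lx]\big)[\Ly]$. Thus it suffices to count, for each $i$, the number of intersection points of $C_i$ with a generic horizontal line $\{y = \mathrm{const}\}$ and with a generic vertical line $\{x = \mathrm{const}\}$, i.e.\ to determine the bidegree of $C_i$.

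To carry this out I would bihomogenize. Given the affine defining polynomial $P_i(x,y)$ of $C_i$, set $a_i = \deg_x P_i$ and $b_i = \deg_y P_i$, and let $F_i([X:U],[Y:V])$ be the bihomogenization of $P_i$ to bidegree $(a_i,b_i)$. Since $P_i$ contains a monomial of $x$-degree $a_i$ and a monomial of $y$-degree $b_i$, the polynomial $F_i$ is divisible by neither $U$ nor $V$; hence $\mathrm{div}(F_i)$ has no component contained in the lines at infinity $\{U=0\}$ or $\{V=0\}$, so $\mathrm{div}(F_i)$ is exactly the closure of the affine curve $\{P_i = 0\}$, i.e.\ the curve $C_i$ (the $P_i$ are reduced; in fact the $C_i$ are irreducible). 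A bidegree-$(a_i,b_i)$ curve meets a generic vertical line in $b_i$ points and a generic horizontal line in $a_i$ points, giving $[C_i] = a_i [\Lx] + b_i [\Ly]$; this is also recorded in \cite[p. 474]{GH}.

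It then remains only to read off the two partial degrees from the formulas defining the curves: the polynomials for $C_1$ and $C_2$ have bidegree $(1,1)$, those for $C_3$ and $C_4$ have bidegree $(2,2)$, that for $C_6$ has bidegree $(1,2)$, and that for $C_7$ has bidegree $(2,1)$. Substituting into $[C_i] = a_i[\Lx] + b_i[\Ly]$ yields (\ref{EQN_PLAIN1})--(\ref{EQN_PLAIN7}). There is no genuine obstacle here; the only point meriting a moment's care is the remark that homogenizing to the partial degrees $(a_i,b_i)$ cannot split off a line at infinity, which is exactly what licenses reading the class straight from the bidegree. If one preferred to avoid even that, the intersection counts can be checked by hand — for instance, although $C_3$ is disjoint from $\{y=0\}$ in the affine chart, a generic horizontal line $\{y=c\}$ meets $C_3$ in the two roots of the quadratic $c^2x^2 - 6cx + 6 - c$ and in no point at infinity, consistent with $a_3 = 2$.
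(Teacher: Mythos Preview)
Your proof is correct and follows essentially the same approach as the paper: the paper simply invokes the well-known fact that a curve in $\CP^1\times\CP^1$ defined in affine coordinates by $p(x,y)=0$ has class $d_x[\Lx]+d_y[\Ly]$, where $d_x,d_y$ are the degrees of $p$ in $x$ and $y$, and then reads off the partial degrees exactly as you do. Your write-up supplies the justification (via bihomogenization and the check that no component at infinity appears) that the paper leaves implicit.
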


\begin{proof}
Equations (\ref{EQN_PLAIN1}) - (\ref{EQN_PLAIN7}) follow from the well-known fact that if a curve $C\subset\mathbb{CP}^1\times\mathbb{CP}^1$ is defined in standard affine coordinates by a polynomial $p(x,y)=0$, then 
\[[C]=d_x[L_x]+d_y[L_y],\]
 where $d_x$ is the degree of $p$ with respect to $x$, and $d_y$ is the degree of $p$ with respect to $y$.
\end{proof}

We now lift the results of Lemma \ref{LEM:COH1} to the blown-up space $\X$:

\begin{lemma} \label{LEM:COH2} In ${\rm H}^{1,1}(\X)$ we have
\begin{align}
\left[\widetilde{C_1}\right] &= \pi^{\ast}\left[\Lx\right] + \pi^{\ast}\left[\Ly\right] - \left[E_1\right] - \left[E_2\right] - \left[E_3\right], \label{EQN:FANCY1}\\
\left[\widetilde{C_2}\right] &=\pi^{\ast}\left[\Lx\right] + \pi^{\ast}\left[\Ly\right] - \left[E_1\right],  
\label{EQN:FANCY2} \\
\left[\widetilde{C_3}\right] &= 2\pi^{\ast}\left[\Lx\right] + 2\pi^{\ast}\left[\Ly\right] - \left[E_1\right] - 2\left[E_2\right] - \left[E_3\right],  \label{EQN:FANCY3} \\
\left[\widetilde{C_4}\right] &= 2\pi^{\ast}\left[\Lx\right] + 2\pi^{\ast}\left[\Ly\right] - \left[E_1\right] - \left[E_2\right] - 2\left[E_3\right],  \label{EQN:FANCY4} \\
\left[\widetilde C_6\right]  &= \pi^{\ast}\left[\Lx\right] + 2\pi^{\ast}\left[\Ly\right] - \left[E_1\right] - \left[E_2\right] - \left[E_3\right], \quad \mbox{and}  
\label{EQN:FANCY6} \\
\left[\widetilde C_7\right] &=  2\pi^{\ast}\left[\Lx\right]+ \pi^{\ast}\left[\Ly\right] - \left[E_1\right] - \left[E_2\right] - \left[E_3\right].  \label{EQN:FANCY7}
\end{align}
\end{lemma}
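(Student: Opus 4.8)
The plan is to invoke the standard relation, for a blow-up $\pi\colon \X \to \CP^1\times\CP^1$ at the points $p_1,p_2,p_3$, between the total transform of a curve, its proper transform, and the exceptional divisors:
\[
\pi^{\ast}[C] \;=\; \big[\widetilde C\,\big] \;+\; \sum_{i=1}^{3} m_{p_i}(C)\,[E_i],
\]
where $m_{p_i}(C)$ is the multiplicity of $C$ at $p_i$, i.e. the degree of the lowest-order homogeneous part of a local defining polynomial of $C$ in coordinates centered at $p_i$; see \cite{GH}. Since $\pi^{\ast}$ is linear and Lemma \ref{LEM:COH1} already expresses $[C_j] = d_x[\Lx] + d_y[\Ly]$ for $j\in\{1,2,3,4,6,7\}$, each of (\ref{EQN:FANCY1})--(\ref{EQN:FANCY7}) reduces to the purely local task of computing the six-by-three table of multiplicities $m_{p_i}(C_j)$.

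To fill in that table I would work in the local coordinate charts around $p_1,p_2,p_3$ set up in Section \ref{SEC_AS}. Near $p_1 = (1,1)$ I substitute $x = 1+a$, $y = 1+b$ into each defining polynomial, expand, and record the lowest-degree term; in every case the constant term vanishes and the linear part is nonzero, so $m_{p_1}(C_j) = 1$ for all six curves. Near $p_2 = (\infty,0) = \{(u,y)=(0,0)\}$ I set $x = 1/u$, clear denominators to obtain an honest polynomial in $(u,y)$, and again read off the lowest-order term: this yields $m_{p_2}(C_j)=1$ for $j\in\{1,6,7\}$ and $m_{p_2}(C_4)=1$, yields $m_{p_2}(C_2)=0$ because the cleared polynomial of $C_2$ has nonzero constant term (so $p_2\notin C_2$), and yields $m_{p_2}(C_3)=2$ because the cleared polynomial of $C_3$ is $y^2 - 6uy - u^2y + 6u^2$, whose lowest-order part is the nondegenerate quadratic form $y^2-6uy+6u^2$. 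The symmetric computation near $p_3 = (0,\infty)=\{(x,v)=(0,0)\}$ with $y = 1/v$ gives $m_{p_3}(C_j)=1$ for $j\in\{1,6,7\}$, $m_{p_3}(C_2)=0$, $m_{p_3}(C_3)=1$, and $m_{p_3}(C_4)=2$. Much of this is forced by the reflection $R(x,y)=(y,x)$, which fixes $p_1$, swaps $p_2\leftrightarrow p_3$, fixes $C_1$ and $C_2$ individually, and interchanges $C_3\leftrightarrow C_4$ and $C_6\leftrightarrow C_7$, so the rows for $C_4$ and $C_7$ follow from those for $C_3$ and $C_6$ after exchanging $[E_2]$ and $[E_3]$. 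Substituting the completed table into the displayed formula, together with Lemma \ref{LEM:COH1}, produces (\ref{EQN:FANCY1})--(\ref{EQN:FANCY7}).

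I do not expect a genuine obstacle here; the only subtlety worth flagging is that $C_3$ and $C_4$ are not smooth at $p_2$ and $p_3$ respectively — each acquires an ordinary double point there — which is exactly why the coefficient of $[E_2]$ in (\ref{EQN:FANCY3}) and of $[E_3]$ in (\ref{EQN:FANCY4}) equals $-2$ rather than $-1$, and one must likewise be careful to confirm that $C_2$ genuinely misses $p_2$ and $p_3$ so that $[E_2]$ and $[E_3]$ are correctly absent from (\ref{EQN:FANCY2}).
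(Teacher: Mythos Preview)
Your proposal is correct and follows essentially the same approach as the paper: both invoke the standard relation $\pi^{\ast}[C] = [\widetilde C] + \sum_i m_{p_i}(C)\,[E_i]$, combine it with Lemma~\ref{LEM:COH1}, and reduce the problem to computing the table of local multiplicities $m_{p_i}(C_j)$, arriving at the same values. Your write-up is somewhat more explicit about how the multiplicities are computed in local coordinates and about exploiting the $R$-symmetry, but the argument is the same.
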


\begin{proof}
Since $C_1$ has multiplicity $1$ at $p_1$, $p_2$, and $p_3$, and $\left[C_1\right]=\left[\Lx\right]+\left[\Ly\right]$ by Lemma \ref{LEM:COH1}, we can pull back both sides of the equation by $\pi$ to see that
\[\left[\widetilde{C_1}\right] + \left[E_1\right] + \left[E_2\right] + \left[E_3\right] =
\pi^{\ast}\left[\Lx\right] + \pi^{\ast}\left[\Ly\right].\]
Equation (\ref{EQN:FANCY1}) follows.  Equations (\ref{EQN:FANCY2}-\ref{EQN:FANCY7}) follow by similar calculations.  
The only subtlety is computing the local multiplicities, which we summarize here:
\begin{align*}
\begin{array}{|c||c|c|c|}
\hline
\mbox{Curve} & \mbox{mult. at $p_1$} &  \mbox{mult. at $p_2$} & \mbox{mult. at $p_3$} \\
\hline
\hline
C_2 & 1 & 0 & 0 \\
\hline
C_3 & 1 & 2 & 1 \\
\hline
C_4 & 1 & 1 & 2 \\
\hline
C_6 & 1 & 1 & 1 \\
\hline
C_7 & 1 & 1 & 1 \\
\hline
\end{array}
\end{align*}
\end{proof}

Lemmas \ref{LEM:PULLBACK} and \ref{LEM:COH2} make it easy to express $\H^*:
{\rm H}^{1,1}(\X) \rightarrow {\rm H}^{1,1}(\X)$ in the ordered basis (\ref{EQN:BASIS}).
For example, 
\begin{align}
\H^*\left[\widetilde\Lx\right] &= \left[\widetilde C_6\right]+\left[E_3\right]+\left[\widetilde C_4\right] 
= 3\pi^{\ast}\left[\Lx\right] + 4\pi^{\ast}\left[\Ly\right] - 2\left[E_1\right] - 2\left[E_2\right] - 2\left[E_3\right].
\end{align}
Computing all of the others, we find
\begin{align}\label{EQN_MATRIX}
\H^* = 
\left[ \begin {array}{ccccc} 3&4&2&2&2
\\ \noalign{\medskip}4&3&2&2&2\\ 
\noalign{\medskip}-2&-2&-2&-1&-1\\ 
\noalign{\medskip}-2&-2&-1&-2&-1\\ 
\noalign{\medskip}-2&-2&-1&-1&-2\end {array} \right] 
\end{align}
This matrix has characteristic polynomial $-\left(4-\lambda \right) \left( 1+\lambda \right)
^{4}$.  The spectral radius is the largest root, which is $4$.
Using (\ref{EQN:EQUAL_DEGREES}) we have that $\lambda_1(H) = \lambda_1(\H) = 4$.
\qed (Theorem \ref{THM:MAIN} ).

\begin{remark} 
There seems to be no convention on how to express a pullback on
cohomology in matrix notation.  We have written (\ref{EQN_MATRIX}) so
that $\mathcal{H}^*$ acts on column vectors.  However there are many other papers where
the transpose is used.  
\end{remark}

\section{Concluding questions and remarks}\label{SEC_QUESTIONS}

Computing the dynamical degrees of $H$ and gleaning the immediate dynamical consequences is really just the first step towards an understanding of it as a complex dynamical system.  There are many interesting avenues for further study and connections with other areas.  Among them, we list the following two:

\subsection{Complex proof of theorem on convex classes} One of the key theorems in Schwartz's monograph \cite{SCHWARTZ_PMM} is that (the projective equivalence class) of any convex pentagon has orbit under $H$ converging to (the projective equivalence class) of the regular pentagon.  He gives a computer assisted proof.  Can it be done using techniques from complex dynamics?  Can it be generalized to arbitrary $n$-gons?  A solution would probably require a suitable complex notion of ``convex''.

\subsection{Polarized dynamical system}
The eigenvector of $\H^{\ast}$ corresponding to $\lambda=4$ is 
\[-K_\X = 2\pi^{\ast}[\Lx]+2\pi^{\ast}[\Ly]-[E_1]-[E_2]-[E_3].\]
The referee pointed out the interesting fact that this is the anticanonical class of $\X$ (i.e. minus the canonical class of $X$); see \cite[p.146, p.187]{GH}.  

Recall that one can obtain $\CP^1 \times \CP^1$ by blowing up $\CP^2$ at two points and then blowing down the proper transform of the line between them.  Using this fact, one can recognize that our surface $\X$ is isomorphic to the blow up of $\CP^2$ at $[0:0:1],[0:1:0],[1:0:0]$ and $[1:1:1]$.  Such a surface is a del Pezzo surface  \cite{DOL,WIKI} and thus $-K_X$ is ample.
Because we have
\begin{align*}
\H^* \left(-K_\X \right) = 4 \left(-K_\X\right)
\end{align*}
with $-K_\X$ ample, $\H: \X \dashrightarrow \X$ is called a {\em polarized
dynamical system} \cite{KAW,ZHANG}.  Polarized dynamical systems
have been primarily of interest in arithmetic and algebraic dynamics and they
are usually studied for endomorphisms (i.e. rational mappings without
indeterminate points).  However, we mention the polarization for $\H$ in case there is a connection with arithmetic dynamics, despite the indeterminate points of $\H$, or in case it can be used for some other purpose.

\bibliographystyle{plain}
\bibliography{midpoint_map.bib}

\end{document}